\newcommand{\old}[1]{}
\theoremstyle{plain}
\newtheorem{thm}{Theorem}[section]
\newtheorem{lem}[thm]{Lemma}
\newtheorem{cor}[thm]{Corollary}
\newtheorem{prop}[thm]{Proposition}
\theoremstyle{definition}
\newtheorem{defn}[thm]{Definition}
\newtheorem{ex}[thm]{Example}
\newtheorem{rk}[thm]{Remark}
\title{Diameter bound  for  facet-ridge incidence  graphs of geometric lattices}
\author{Patricia Hersh} 
\address{Department of Mathematics, University of Oregon, Eugene, OR 97403}
\email{plhersh@uoregon.edu}
\author{John Machacek}
\address{Department of Mathematics, University of Oregon, Eugene, OR 97403}
\email{johnmach@uoregon.edu}
\thanks{
 This work was supported by 
NSF grants 
DMS-1953931 and NSF-RTG grant DMS-2039316.
}
\subjclass[2020]{05E45, 05B35,  06A07, 
 05C12}
\begin{document}

\begin{abstract}
This paper proves that the facet-ridge incidence graph of the order complex of any  finite geometric lattice of rank $r$ has diameter at most ${r \choose 2}$.  A key ingredient is the well-known  fact that every ordering of the atoms of any finite geometric lattice gives rise  to a lexicographic shelling of its order complex.  The paper also gives results that provide  some evidence that  this bound ought to be  sharp as well as examples indicating that the question of sharpness is quite subtle.
\end{abstract}

\maketitle

\noindent \emph{Keywords:}  
 geometric lattice, diameter, lexicographic shellability, facet-ridge incidence graph. \\
 \\

\section{Introduction}\label{intro-section}

In this short paper, we prove one direction of  a conjecture of Jose Samper.  He conjectured (personal communication) that the diameter of the facet-ridge graph 
 of any finite  geometric lattice of rank $r$ is exactly ${r \choose 2}$.  We prove that this upper bound holds.   More specifically, for any pair of maximal chains in a finite geometric lattice of rank $r$, we construct a path of length at most ${r\choose 2}$ in the facet-ridge incidence graph  from one maximal chain   to the other.    
 We also give results that we be believe are steps towards  proving sharpness  of this bound and in fact proving  something stronger than sharpness. 
  That is, we prove that each vertex in our graph has another vertex at distance exactly 
 ${r\choose 2}$ from it in a spanning directed  subgraph where we would  expect to find the shortest paths between this  pairs of vertices. 

A key ingredient in our proof of the diameter upper bound   is the result  from \cite{BJ1} (see also \cite{DH})  that every ordering on the atoms of a geometric lattice gives rise to a so-called minimal labeling that is an EL-labeling.  
Given any two maximal chains  
 in a finite geometric lattice, this result allows us 
 to choose an EL-labeling  such  that  
 one of these maximal chains  comes first in lexicographic order.  In particular, this reduces the diameter upper bound
 question to showing there is a path of length at most ${r\choose 2}$ from any maximal chain to the lexicographically earliest one.   Each step in the path that we construct  transforms a single descent in the associated label sequence 
 into an ascent.


We associate 
a sequence of adjacent transpositions in the symmetric group to each monotone  path in the graph, namely to  each path  that is monotonic   with respect to lexicographic order. 
 We prove for any maximal  chain  that there exists   one 
 such   sequence of adjacent transpositions leading to the lexicographically earliest maximal chain
 which is a reduced expression for some element  in the symmetric group $S_r$ (see Section ~\ref{bg-section} for definitions).  This  allows us to conclude that the upper bound is at most the length of the longest element in the symmetric group.  The desired upper bound then follows from the fact that 
 Coxeter-theoretic length of each element in the symmetric group $S_r$ is at most  ${r\choose 2}$.   

It may be helpful to consider the very special case in which our geometric lattice is  the poset $B_r$  of subsets of $\{ 1,2,\dots ,r\} $ ordered by containment.  Any minimal labeling (see Definition ~\ref{min-def}) for $B_r$  will label the  $r!$ maximal chains with the various permutations in $S_r$, with each permutation labeling exactly one maximal chain.   In this case the diameter  of the facet-ridge incidence graph of the order complex is exactly ${r\choose 2}$, as we explain next.   The length of the shortest path between two permutations $\sigma , \tau \in S_r$  is the number of adjacent transpositions needed to transform the one permutation to the other.  This is exactly the length of the permutation $\sigma^{-1}\circ \tau $.  It is  exactly the number of pairs of  elements of $\{ 1,2,\dots ,r\}$ that are in order in one permutation but out of order in the order.  There are at most ${r\choose 2}$ such pairs,  giving the desired upper bound.  In the case that $\sigma $ is the reversal of $\tau $,  there are  exactly ${r\choose 2}$ such pairs, demonstrating sharpness of the bound in this case.  

It is not hard to show that every geometric lattice of rank $r$ contains a copy of $B_r$ within it.  Indeed  we effectively verify this claim  when we show for the ascending label sequence that the reversal of this sequence also occurs on some maximal chain and also show that each label sequence obtained from this decreasing sequence by applying a series of adjacent tranpositions also occurs.  We provide evidence  that the distance between the reversal of the ascending chain and the ascending chain itself is exactly ${r\choose 2}$.  This distance of ${r\choose 2}$  would imply sharpness of our upper bound on the diameter of the facet ridge incidence graph  and hence would prove Samper's conjecture.


An upper bound on the diameter of facet ridge incidence graphs  for all  pure, strongly connected  simplicial complexes is given in~\cite{CS}.  This bound is far  larger than our  upper bound, as one might expect  in light of the matroid exchange axiom and consequent high level of connectivity in  the class of pure strongly connected 
 simplicial complexes  that we are considering. 
A particularly important setting  where diameters of  facet-ridge incidence graphs arise is in the work of Francisco Santos disproving  the Hirsch Conjecture (see \cite{Sa}); his counterexamples involve the 1-skeleta of polytopes, but to analyze them he first takes their polytopal duals where the 1-skeleta of the original polytopes become exactly the facet-ridge incidence graphs of their dual polytopes.



\section{background}\label{bg-section}

We begin with a  quick review  of terminology and prior results  that will be  needed for the proof of our main result and for understanding the statements of our results.

\begin{defn}
The {\bf facet-ridge incidence graph} of a  simplicial complex $\Delta $  whose maximal faces all have the same dimension  is the graph whose vertices are the facets (i.e. maximal faces) of $\Delta $ and whose edges are the pairs of facets sharing a codimension one face (known as a ``ridge'').
\end{defn}

\begin{defn}
The {\bf distance}  $d(u,v)$ between a pair of vertices $u,v$ in a graph $G$ is the minimal number of edges in any path from $u$ to $v$.
The {\bf diameter} of a graph $G$ is the largest distance between any pair of vertices of $G$.   Thus, $diam (G) = \max_{u,v\in V(G)} d(u,v)$.
\end{defn}

\begin{defn}
A {\bf cover relation } in a partially ordered set (poset) is an order relation $u < v$ in which $u\le z\le v$ implies $z=u$ or $z=v$.  Such an order relation is denoted $u\lessdot v$.  

A  partially ordered set  $P$ is {\bf bounded} if it has a unique minimal element $\hat{0}$ and unique maximal element $\hat{1}$.  
A bounded poset 
 is {\bf graded} if all maximal chains 
$\hat{0}\lessdot x_1 \lessdot \cdots \lessdot x_{r-1}\lessdot \hat{1}$   in $P$ 
have this  same number $r$  of cover relations as each other.  In this case, this number $r$  is called the {\bf rank}
of the poset, and we say that the element $x_i$ in a maximal chain $\hat{0}\lessdot x_1 \lessdot \cdots \lessdot x_i \lessdot \cdots x_{r-1}\lessdot \hat{1}$ has rank $i$.  
\end{defn}

\begin{defn}
Any element covering $\hat{0}$ in a finite lattice is called an {\bf atom}.  A finite  lattice  $L$ is called {\bf atomic} if each element of $L$ is expressible as a join, namely a least upper
bound, of a set of atoms of $L$.  It is {\bf semimodular} if $x\lessdot y$ and $x\lessdot y'$ in $L$ implies the existence of $z\in L$ with $y\lessdot z$ and $y'\lessdot z$.
\end{defn}

\begin{defn}
A  lattice is {\bf geometric} if and only if it is finite, atomic and semimodular.    Alternatively, one may take as the definition that a finite  
lattice is geometric if and only if it is the lattice of flats of a simple matroid.  We refer readers to \cite{Bj2} 
 for basic notions and definitions  of matroid theory.
\end{defn}


\begin{rk}
For finite lattices  semimodularity implies gradedness, as discussed in \cite{st}.  Thus, geometric lattices are graded.
\end{rk}

Bj\"orner introduced the notion of EL-labeling, reviewed next, in \cite{BJ1}.  He proved that any finite bounded poset with an EL-labeling was shellable via a type of shelling he called an EL-shelling.   An {\bf edge labeling} of a finite partially ordered set $P$  is a function $\lambda :P\rightarrow \Lambda  $ assigning to each cover relation $u  \lessdot v$ 
in $P$ an element $\lambda (u,v)$  of a partially  ordered set $\Lambda $.  In our setting,  $\Lambda $ will always be a totally ordered set.    Notice that any such edge labeling  $\lambda $ may be used to  assign a 
label sequence $(\lambda (u_0,u_1), \lambda (u_1,u_2),\dots ,\lambda (u_{r-1},u_r))$ to each maximal chain $u_0\lessdot u_1\lessdot \cdots \lessdot u_r$ in $P$.  

An ordered pair $(\lambda (u_{i-1},u_i), \lambda (u_i,u_{i+1}))$ of labels  on a pair  $u_{i-1}\lessdot  u_i $ and $u_i  \lessdot  u_{i+1}$ of consecutive cover relations 
is called an {\bf ascent}  
 if  $\lambda (u_{i-1},u_i) \le \lambda (u_i,u_{i+1})$.  This ordered pair of labels  is called a {\bf descent} otherwise.  

\begin{defn}
An edge labeling $\lambda $ of a finite bounded poset $P$ is an {\bf EL-labeling} if for each $u<v$ in $P$ both of the following conditions hold:
\begin{enumerate}
\item
There is a unique saturated chain $u\lessdot  u_1\lessdot  u_2\lessdot  \cdots \lessdot u_k\lessdot v$ from $u$ to $v$ (called the ``ascending chain'' from $u$ to $v$) 
satisfying $\lambda (u,u_1)\le \lambda (u_2,u_3)\le \cdots \le \lambda (u_k,v)$.
\item
This saturated chain has label sequence which is strictly smaller in lexicographic order than the label sequence of every other saturated chain from $u$ to $v$.
\end{enumerate}
\end{defn}


Next we describe a family of EL-labelings for geometric lattices,  introduced by Bj\"orner in \cite{BJ1},  that
will be critical to our work.   

\begin{defn}[Bj\"orner]\label{min-def}
Let $A$ denote the atoms of a  geometric lattice $L$.  For each $u\in L$ 
let $A(u) = \{ a\in A| a\le u\} $. 

Given any ordering $a_1,\dots ,a_n$ on the elements of  $A$, 
define the  {\bf minimal labeling} $\lambda $ on  
 $L$ induced by this atom  ordering
 to be the edge labeling with label set $A$ that is  given by  $\lambda (u,v) = \min_{a\in A(v)\setminus A(u)} a$ with ordering $a_1<a_2<\cdots <a_n$ on our set of  edge labels.
\end{defn}


The following important  related result of Bj\"orner also   appears in  \cite{BJ1}. 

\begin{thm}[Bj\"orner]\label{any-atom-ordering}
The minimal labeling induced by any total ordering on the atoms of a geometric lattice $L$ is an EL-labeling.
\end{thm}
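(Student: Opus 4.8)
The plan is to establish both EL-labeling conditions simultaneously by giving an explicit greedy description of the ascending chain on every interval $[u,v]$ and showing that this chain is at once the unique weakly increasing chain and the strict lexicographic minimum. Fix the atom order $a_1 < \cdots < a_n$, and for $u < v$ write $c(u,v) = \min\{a\in A : a\le v,\ a\not\le u\}$ for the smallest atom below $v$ but not below $u$; by the definition of the minimal labeling, $\lambda(u,w) = c(u,w)$ whenever $u\lessdot w$. Two structural facts about geometric lattices will be used repeatedly, and I would isolate them (or cite them) first: since $L$ is atomic, every element satisfies $v = \bigvee A(v)$; and by semimodularity, if $a$ is an atom with $a\not\le w$ then $w\vee a$ covers $w$.

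First I would construct the candidate ascending chain on $[u,v]$ greedily: set $w_0 = u$, and having reached $w_{i-1} < v$, put $c_i = c(w_{i-1},v)$ and $w_i = w_{i-1}\vee c_i$. Each $w_i$ covers $w_{i-1}$ by the semimodularity fact, and the process terminates exactly at $v$, since $A(v)\setminus A(w_{i-1}) = \emptyset$ forces $v = \bigvee A(v) \le w_{i-1}\le v$. The label of the $i$-th step is exactly $c_i$: any atom below $w_i$ but not below $w_{i-1}$ lies in $A(v)\setminus A(w_{i-1})$ and is therefore $\ge c_i$, while $c_i$ itself realizes the minimum. Moreover the labels strictly increase, because $A(v)\setminus A(w_i)\subsetneq A(v)\setminus A(w_{i-1})$ (the atom $c_i$ has been removed), so $c_{i+1} = \min\bigl(A(v)\setminus A(w_i)\bigr) > c_i$. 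This produces a weakly (indeed strictly) increasing chain, giving the existence half of condition (1).

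The core of the argument is a single comparison observation: for any saturated chain $u = w_0\lessdot\cdots\lessdot w_k = v$ with label sequence $d_1,\dots,d_k$, the first label satisfies $d_1\ge c(u,v)$, because $d_1$ is an atom below $v$ but not below $u$. I would then run a joint induction on the rank of the interval. For uniqueness: if the chain is weakly increasing, choose the least $j$ with $c(u,v)\le w_j$; then $d_j\le c(u,v)$, and monotonicity gives $d_1\le d_j\le c(u,v)\le d_1$, forcing $d_1 = c(u,v)$, $j=1$, and $w_1 = u\vee c(u,v)$, so $w_1$ agrees with the greedy chain, and the inductive hypothesis applied to $[w_1,v]$ finishes it. For lexicographic minimality (condition (2)): a chain other than the greedy one agrees with it up to some $w_{i-1}$ and then deviates; its $i$-th label is the first label of a saturated chain on $[w_{i-1},v]$, so $d_i\ge c_i$, and equality would force $w_i = w_{i-1}\vee c_i$ (the greedy choice), contradicting deviation—hence $d_i > c_i$ at the first point of difference, so the greedy chain is the strict lex-minimum.

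I expect the main obstacle to be the bookkeeping that links the combinatorics of the atom sets $A(w)$ to the lattice operations, specifically making rigorous that $w\vee a$ covers $w$ for an atom $a\not\le w$ and that $c(u,v)\le w_1$ forces $w_1 = u\vee c(u,v)$. Both rest on semimodularity together with $v = \bigvee A(v)$, and pinning down the cover/rank arithmetic exactly (rather than merely up to inequality) is where care is required; once those facts are in hand, the remainder is a clean greedy-plus-induction argument.
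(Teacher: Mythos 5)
Your proposal is correct, but there is nothing in the paper to compare it against: the paper states this result as Theorem~\ref{any-atom-ordering} with attribution to Bj\"orner and simply cites \cite{BJ1} for it, offering no proof of its own. Your greedy-chain argument is essentially the standard (Bj\"orner-style) proof, and it checks out. The construction $w_i = w_{i-1}\vee c_i$ with $c_i = \min\bigl(A(v)\setminus A(w_{i-1})\bigr)$ does produce a saturated, strictly increasing chain with label sequence $(c_1,\dots,c_k)$; your comparison observation that any saturated chain in $[u,v]$ has first label $d_1 \ge c(u,v)$ is right; the least-$j$ argument correctly forces any weakly increasing chain to begin with $w_1 = u\vee c(u,v)$ (since $d_1 = c(u,v)$ implies $c(u,v)\le w_1$, so $j=1$, and $u\lessdot w_1$ pins down $w_1$), after which induction on the rank of the interval gives uniqueness; and at the first point of deviation of any other chain, equality $d_i = c_i$ would force the greedy cover, so $d_i > c_i$ and lexicographic minimality is strict. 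The one ingredient you defer rather than prove --- that $w\vee a$ covers $w$ whenever $a$ is an atom with $a\not\le w$ --- is indeed a standard consequence of semimodularity (for instance via the submodular rank function of a finite semimodular lattice, using $w\wedge a = \hat{0}$), so isolating or citing it, as you propose, is legitimate and leaves no genuine gap.
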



\begin{defn}
The {\bf order complex} of a finite partially ordered set $P$  is the abstract simplicial complex, denoted $\Delta (P)$, whose $i$-dimensional 
faces are the chains $v_0 < v_1 < \cdots < v_i$ of $i+1$ 
comparable elements in $P$.
\end{defn}

\begin{defn}
A {\bf reduced expression} for a permutation $\pi $  in the symmetric group $S_r$ is a way to write $\pi $ as a product $s_{i_1}\cdots s_{i_k}$ of adjacent transpositions
$s_i = (i,i+1)$ with $k$ as small as possible.  In this case, $k$ is called the {\bf length} of $\pi $.    This length of $\pi $ also equals the number of {\bf inversion pairs} in $\pi $, namely the number of ordered pairs $(i,j)$ with $i<j$ and $\pi (i)>\pi (j)$.  

The {\bf longest element} in $S_r$ is the unique permutation having maximal length, namely the reverse permutation $(r,r-1,\dots ,2,1)$.  This  has length ${r\choose 2}$.
\end{defn}

\begin{defn}\label{reflection-sequence}
Given any  expression $s_{i_1}\cdots s_{i_d}$ in $S_r$, we associate to this the sequence 
$$(s_{i_1},s_{i_1}s_{i_2}s_{i_1},\dots , s_{i_1}s_{i_2}\cdots s_{i_{d-1}}s_{i_d}s_{i_{d-1}}\cdots s_{i_1})$$ of reflections in $S_r$.  
This is discussed more generally for Coxeter groups  in Chapter 1 of \cite{BB}.
\end{defn}

Next we give a quick proof of the following result whose consequence in the case of the symmetric group, discussed next, will be critical to our work:

\begin{lem}
An expression in a finite Coxeter group  $W$ is reduced if and only if its associated sequence of reflections (as in Definition ~\ref{reflection-sequence})  has no repeats. 
\end{lem}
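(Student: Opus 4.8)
The plan is to pass to the reflection representation of $W$ on its root system, where the sequence of reflections becomes a sequence of roots and both ``reduced'' and ``no repeats'' acquire clean root-theoretic meanings. Writing $w_k = s_{i_1}\cdots s_{i_k}$ for the partial products and setting $\beta_k = w_{k-1}(\alpha_{i_k})$, where $\alpha_{i_k}$ is the simple root of $s_{i_k}$, the conjugation rule $w s_{\alpha} w^{-1} = s_{w\alpha}$ identifies the $k$-th term of the sequence in Definition~\ref{reflection-sequence} with the reflection $t_k = s_{\beta_k}$. Since the only roots proportional to a given one are $\pm$ itself, and distinct positive roots determine distinct reflections, this gives the first dictionary: $t_k = t_j$ if and only if $\beta_k = \pm \beta_j$. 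Thus ``the reflection sequence has no repeats'' means exactly ``the $\beta_k$ are pairwise non-proportional.''

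Next I would record the length criterion. For a simple reflection, $\ell(w s_i)$ exceeds $\ell(w)$ exactly when $w(\alpha_i) > 0$ and drops otherwise; applied along $w_0, w_1, \dots, w_d$ this gives $\ell(w_k) - \ell(w_{k-1}) = \pm 1$ according to the sign of $\beta_k$. Telescoping yields $\ell(w_d) = d - 2\,\#\{k : \beta_k < 0\}$, so the expression $s_{i_1}\cdots s_{i_d}$ is reduced precisely when every $\beta_k$ is positive. The lemma is therefore equivalent to the statement that the $\beta_k$ are all positive if and only if they are pairwise non-proportional, and I would prove the two implications separately.

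For the forward implication (reduced $\Rightarrow$ distinct) I would argue the contrapositive at the group level: from a coincidence $t_k = t_j$ with $k < j$, cancelling the common prefix $s_{i_1}\cdots s_{i_{k-1}}$ and suffix $s_{i_{k-1}}\cdots s_{i_1}$ leads to the identity $s_{i_k}\cdots s_{i_j} = s_{i_{k+1}}\cdots s_{i_{j-1}}$, whose right-hand side uses two fewer generators. Substituting it into $w$ deletes the letters $s_{i_k}$ and $s_{i_j}$ and exhibits $w$ as a product of $d-2$ generators, so the original expression was not reduced. For the reverse implication (distinct $\Rightarrow$ reduced), again by contraposition, suppose the word is not reduced and let $m$ be least with $\beta_m < 0$. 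Then $w_{m-1}$ is reduced and $w_{m-1}(\alpha_{i_m}) < 0$, so $-\beta_m$ is a positive root that $w_{m-1}^{-1}$ sends to the negative root $-\alpha_{i_m}$. Because the positive roots inverted by a reduced word $w_{m-1}$ are exactly $\beta_1, \dots, \beta_{m-1}$, the Exchange Condition locates an index $k < m$ with $-\beta_m = \beta_k$, producing the repeat $t_k = t_m$.

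I expect the main obstacle to be these two exchange-type steps, which are really the crux of the statement: verifying the cancellation $s_{i_k}\cdots s_{i_j} = s_{i_{k+1}}\cdots s_{i_{j-1}}$ in the forward direction, and, in the reverse direction, justifying that a reduced prefix $w_{m-1}$ with $w_{m-1}(\alpha_{i_m}) < 0$ must already carry $-\beta_m$ among its earlier roots $\beta_1, \dots, \beta_{m-1}$. Both are instances of the (strong) Exchange Condition in Coxeter groups recorded in \cite{BB}; once they are in hand, the remainder is the bookkeeping of signs together with the root/reflection dictionary above.
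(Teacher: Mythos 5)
Your proof is correct, but it distributes the work differently than the paper does. The paper cites \cite{BB} (Lemma 1.3.1) for the implication ``reduced $\Rightarrow$ no repeats'' and then proves ``non-reduced $\Rightarrow$ repeat'' by invoking the deletion property: a deletion pair $(j,k)$ is massaged algebraically into a coincidence $t_j = t_k$ of reflections. You do the opposite on both counts. For ``repeat $\Rightarrow$ non-reduced'' you run essentially the paper's algebra in reverse: from $t_k = t_j$ you cancel the common prefix and suffix to get $s_{i_k}\cdots s_{i_j} = s_{i_{k+1}}\cdots s_{i_{j-1}}$, which shortens the word by two letters; this is elementary and self-contained, needing no Coxeter-theoretic input at all, whereas the paper outsources this direction to a citation. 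For ``non-reduced $\Rightarrow$ repeat'' you pass to the reflection representation: with $\beta_k = w_{k-1}(\alpha_{i_k})$, the telescoped length criterion shows the word is reduced iff every $\beta_k$ is positive, and taking $m$ minimal with $\beta_m < 0$ the strong exchange condition (equivalently, the inversion-set description of the reduced prefix $w_{m-1}$) produces $k < m$ with $\beta_k = -\beta_m$, hence $t_k = t_m$. The paper's route stays entirely at the level of words and the deletion property, so it is shorter and needs less machinery; yours costs the setup of roots, the length criterion, and strong exchange (all in \cite{BB}), but in return makes both directions transparent and supplies the conceptual dictionary --- reduced means all $\beta_k$ positive, a repeat means two $\beta_k$ proportional --- which is also the picture underlying the wiring-diagram reformulation in Corollary~\ref{no-double}. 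One small wording caution: the inversion-set fact you invoke should be stated as ``the positive roots sent to negative roots by $w_{m-1}^{-1}$ are exactly $\beta_1,\dots,\beta_{m-1}$''; your phrase ``inverted by $w_{m-1}$'' is ambiguous about which side acts, though your computation $w_{m-1}^{-1}(-\beta_m) = -\alpha_{i_m}$ makes clear you intend the correct statement.
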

\begin{proof}
Lemma 1.3.1 in \cite{BB} proves one direction, namely that reduced expressions have no repeats in their associated sequences of reflections.  

For the other direction, suppose that $s_{i_1}\cdots s_{i_d}$ is nonreduced.  By the deletion property for Coxeter groups, this implies that 
$$s_{i_1}\cdots s_{i_d} = s_{i_1}\cdots \hat{s}_{i_j}\cdots \hat{s}_{i_k}\cdots s_{i_d}$$ for some $1\le j < k\le d$.    But then 
$s_{i_j}\cdots s_{i_k} = s_{i_{j+1}}\cdots s_{i_{k-1}}=w$ for some $w\in W$, which implies $w^{-1} = s_{i_{k-1}}\cdots s_{i_{j+1}}$  and hence implies 
$$(s_{i_j}\cdots s_{i_{k-1}}s_{i_k})(s_{i_{k-1}}\cdots s_{i_{j+1}})s_{i_j}  = ww^{-1}s_{i_j} = s_{i_j}.$$  But then multiplying both sides on the left by $s_{i_1}\cdots s_{i_{j-1}}$ and both sides on the right by $s_{i_{j-1}}\cdots s_{i_1}$
yields $$(s_{i_1}\cdots  s_{i_{j-1}})(s_{i_j} \cdots s_{i_{k-1}}s_{i_k}s_{i_{k-1}}\cdots  s_{i_j}))(s_{i_{j-1}}\cdots s_{i_1}) = (s_{i_1}\cdots s_{i_{j-1}})s_{i_j}(s_{i_{j-1}}\cdots s_{i_1}).$$  
This exhibits that two reflections in the sequence of reflections associated to $s_{i_1}\cdots s_{i_d}$ are equal to each other.  
\end{proof}




\begin{defn}
A {\bf wiring diagram} is a family of $r$ piecewise-straight lines called wires,  
which can be viewed as graphs of $r$ continuous piecewise-linear functions defined on the same interval from left to right. The wires are labeled $1,2,\dots, r$. We draw wiring diagrams as shown in Figure~\ref{fig:red} and Figure~\ref{fig:nonred}. In particular, we draw the graphs of the piecewise-linear functions from left to right. On the left we label wires $1,2,\dots, r$ from top to bottom. The slope of pieces of each graph will only take values from $\{-1, 0, 1\}$. Furthermore, the interval is split into equal sized discrete time intervals, and on each such time interval a wire with slope $-1$ crosses a wire with slope $1$ while all other wires have slope $0$.
\end{defn}

Wiring diagrams with wires labled $1,2,\dots, r$ and expressions for elements in $S_r$ using the generators $\{s_1, s_2, \dots, s_{r-1}\}$ are in bijection as follows.
The $i$th highest wire crossing the  $(i+1)$-th highest wire corresponds to the generator $s_i$.
We may consider the crossings from left to right in a wiring diagram  to get an expression.
Conversely, given an expression we may always construct a wiring diagram with the proscribed crossings. 
Figures~\ref{fig:red} and ~\ref{fig:nonred} show wiring diagrams along with the expressions with which  they are  in bijection. 

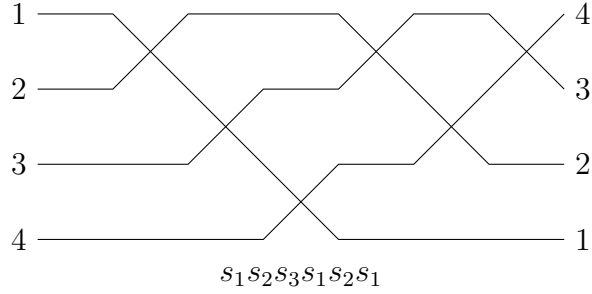
\begin{figure}
\centering
\begin{tikzpicture}
\draw (0,9) -- (1,9) -- (2,8) -- (3,7) -- (4,6) -- (5,6) -- (6,6) -- (7,6);
\draw (0,8) -- (1,8) -- (2,9) -- (3,9) -- (4,9) -- (5,8) -- (6,7) -- (7,7);
\draw (0,7) -- (1,7) -- (2,7) -- (3,8) -- (4,8) -- (5,9) -- (6,9) -- (7,8);
\draw (0,6) -- (1,6) -- (2,6) -- (3,6) -- (4,7) --(5,7) -- (6,8) -- (7,9);
  \node at (-0.25,9) {$1$};
    \node at (-0.25,8) {$2$};
  \node at (-0.25,7) {$3$};
  \node at (-0.25,6) {$4$};
    \node at (7.25,9) {$4$};
    \node at (7.25,8) {$3$};
  \node at (7.25,7) {$2$};
  \node at (7.25,6) {$1$};
  \node at (3.5,5.5) {$s_1 s_2 s_3 s_1 s_2 s_1$};
\end{tikzpicture}
\caption{A wiring diagram corresponding to the reduced expression $s_1 s_2 s_3 s_1 s_2 s_1$ for the longest element in $S_4$.  
Each pair of wires crosses exactly once.}
\label{fig:red}
\end{figure}

\begin{figure}
\centering
\begin{tikzpicture}
  \draw (0,4) -- (1,4) -- (2,3) -- (3,2) -- (4,2) -- (5,1) -- (6,1);
  \draw (0,3) -- (1,3) -- (2,4) -- (3,4) -- (4,3) -- (5,3) -- (6,4);
  \draw (0,2) -- (1,2) -- (2,2) -- (3,3) -- (4,4) -- (5,4) -- (6,3);
  \draw (0,1) -- (2,1) -- (3,1) -- (4,1) -- (5,2) -- (6,2);
  \node at (-0.25,4) {$1$};
    \node at (-0.25,3) {$2$};
  \node at (-0.25,2) {$3$};
  \node at (-0.25,1) {$4$};
    \node at (6.25,4) {$2$};
    \node at (6.25,3) {$3$};
  \node at (6.25,2) {$4$};
  \node at (6.25,1) {$1$};
  \node at (3,0.5) {$s_1 s_2 s_1 s_3 s_1$};
\end{tikzpicture}

\caption{A wiring diagram which corresponds to the  expression $s_1 s_2 s_1 s_3 s_1$ which is not reduced.  
  The pair of wires labeled by $2$ and $3$ cross twice.}
\label{fig:nonred}
\end{figure}
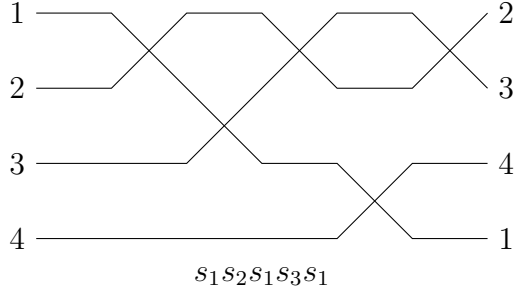

\begin{defn}
When a pair of wires cross each other twice in a wiring diagram, we call this a {\bf double crossing}.
\end{defn}

\begin{cor}\label{no-double}
An expression  for a permutation in $S_r$ is reduced if and only if its wiring diagram has no double crossings.
\end{cor}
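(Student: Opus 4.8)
The plan is to translate both sides of the claimed equivalence into statements about the reflection sequence associated to the expression, and then invoke the preceding Lemma. Write the expression as $s_{i_1}\cdots s_{i_d}$, let $w_k = s_{i_1}\cdots s_{i_k}$ record the rearrangement of the wires accomplished by the first $k$ crossings (with $w_0$ the identity), and let $t_k$ denote the $k$-th entry of its reflection sequence from Definition~\ref{reflection-sequence}. Adopt the convention that $w_{k-1}(p)$ is the label of the wire occupying position $p$ (counted from the top) just before the $k$-th crossing. The first step is to identify which two wires meet at the $k$-th crossing: reading the diagram left to right, that crossing swaps the wires in positions $i_k$ and $i_k+1$, so the two labels involved are $a = w_{k-1}(i_k)$ and $b = w_{k-1}(i_k+1)$.

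The key computation is that the transposition $(a,b)$ of these two labels is exactly $t_k$. Conjugating the adjacent transposition $s_{i_k} = (i_k, i_k+1)$ by $w_{k-1}$ gives
$$w_{k-1}\, s_{i_k}\, w_{k-1}^{-1} = (w_{k-1}(i_k),\, w_{k-1}(i_k+1)) = (a,b),$$
and the left-hand side is precisely $s_{i_1}\cdots s_{i_{k-1}} s_{i_k} s_{i_{k-1}}\cdots s_{i_1}$, which is the $k$-th term $t_k$ of the reflection sequence. Thus each reflection $t_k$ records exactly the unordered pair of wires crossing at the $k$-th crossing.

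With this dictionary in hand the two conditions match up directly. A pair of wires carrying labels $a$ and $b$ forms a double crossing precisely when there are indices $j < k$ at which this same pair crosses, which by the previous paragraph says $t_j = t_k = (a,b)$; hence the diagram has a double crossing if and only if the reflection sequence has a repeat. The Lemma states that an expression is reduced if and only if its reflection sequence has no repeats, so the wiring diagram has no double crossings if and only if the expression is reduced, as desired.

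The main obstacle is getting the bookkeeping of the first paragraph exactly right: one must fix a single convention for whether $w_{k-1}$ sends positions to wire-labels or the reverse, and then verify that the swap at position $i_k$ is genuinely conjugation by $w_{k-1}$ (rather than by $w_k$ or by $w_{k-1}^{-1}$). Once the convention is pinned down the conjugation identity is a one-line check, and the rest follows immediately from the Lemma.
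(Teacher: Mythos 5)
Your proof is correct and takes essentially the same approach as the paper: identify each entry of the reflection sequence with the unordered pair of wires meeting at the corresponding crossing, then invoke the lemma that an expression is reduced if and only if its reflection sequence has no repeats. The only difference is one of detail—you verify the dictionary explicitly via the conjugation identity $w_{k-1}\,s_{i_k}\,w_{k-1}^{-1} = (w_{k-1}(i_k),\, w_{k-1}(i_k+1))$, whereas the paper simply asserts that the $j$-th reflection records the pair of wires crossing at step $j$.
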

\begin{proof}
Given an expression $s_{i_1}\cdots s_{i_d}$  for a permutation $\pi \in S_r$, notice that the associated sequence of reflections can be described in terms of the wiring diagram 
given by the expression for $\pi $ 
as follows.       The reflection $s_{i_1}s_{i_2}\cdots s_{i_j}\cdots s_{i_2}s_{i_1} = (l,m)$ records  the pair $l,m$ of wires in the wiring diagram that  cross each other at the 
step given by the simple reflection $s_{i_j}$.
 Thus, having the same  pair of wires cross  each other twice is equivalent to encountering the same reflection twice in the  sequence of reflections associated to the 
expression $s_{i_1}\cdots s_{i_d}$  for $\pi $.
\end{proof}

\section{Diameter upper bound}


\begin{defn}\label{G-lex}
Any EL-labeling  $\lambda $ of any finite, graded poset $P$ gives rise to an orientation 
on a spanning subgraph 
of the facet-ridge graph of $\Delta (P)$.   Denote this oriented graph by  $G_{lex}$,  with $G_{lex}$ constructed as follows.  
Given any maximal chain $M$ in $P$  and any descent in  $\lambda (M)$, we obtain another maximal chain $M'$ by deleting the element  of $M$ located at this  descent and letting  $M'$ be the unique maximal chain which has this descent replaced by an ascent and which otherwise coincides with $M$.  
In this case, the two vertices  given by $M$ and $M'$ in the facet-ridge incidence  graph are connected by an edge in $G_{lex}$, with this edge oriented 
from $M$ to $M'$.   All of the directed edges in $G_{lex}$ arise this way.
\end{defn}

\begin{defn}
Whenever a directed edge in $G_{lex}$ replaces a descent at rank $i$ in a maximal chain $M$ with an ascent to obtain a new maximal chain $M'$, 
let us say that $M' = T_i(M)$.  Define $T_{i_1}\cdots T_{i_d}(M) $ inductively to be $T_{i_1}\cdots T_{i_{d-1}}(T_{i_d}(M))$.  
\end{defn}







\begin{lem}\label{assoc-an-expression}
Given any geometric lattice $L$ of rank $r$, any minimal labeling  $\lambda $ for $L$, and  any maximal chain $M$ in $L$, there exists  
$\pi \in S_r$ and a 
reduced expression 
$s_{i_1}\cdots s_{i_d}$ for $\pi $
such that $T_{i_1}\cdots T_{i_d}(M)$ is the unique ascending chain from $\hat{0}$ to $\hat{1}$ in $L$ with respect to $\lambda $.
\end{lem}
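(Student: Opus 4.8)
The plan is to induct on the rank $r$, at each stage bubbling the globally smallest atom down to the bottom of the chain and then recursing on an upper interval. This is essentially a ``selection sort,'' and its associated word will turn out to be reduced. It is worth noting at the outset why a naive greedy ``fix any descent'' strategy is not obviously good enough: in a minimal labeling, resolving a descent at positions $i,i+1$ with labels $a>b$ replaces them by $b,b'$ where $b'$ is the \emph{second} smallest class-minimum of the rank-two interval, so $b'$ may be strictly smaller than $a$. Thus the multiset of labels is not preserved, and the usual ``each swap removes exactly one inversion'' bookkeeping fails. The structured procedure below sidesteps this.

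First I would locate where the smallest atom $a_1$ (smallest in the fixed atom ordering $a_1<\cdots<a_n$ defining $\lambda$) occurs as a label of $M=\hat0\lessdot x_1\lessdot\cdots\lessdot x_r=\hat1$. Letting $p$ be the least index with $a_1\le x_p$, one checks $\lambda(x_{p-1},x_p)=\min(A(x_p)\setminus A(x_{p-1}))=a_1$, since $a_1$ is the global minimum and lies in $A(x_p)\setminus A(x_{p-1})$. Because the sets $A(x_j)\setminus A(x_{j-1})$ partition $A$, their minima are distinct, so all labels along a maximal chain are distinct; as $a_1$ is smallest, the label at position $p-1$ exceeds $a_1$, giving a descent. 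Applying $T_{p-1}$ moves the label $a_1$ to position $p-1$, because the interval $[x_{p-2},x_p]$ has global minimum $a_1$ (as $a_1\le x_p$ and $a_1\not\le x_{p-2}$), so its ascending chain carries $a_1$ first. Iterating $T_{p-1},T_{p-2},\dots,T_1$ — each a genuine descent-fix by the same reasoning, since only positions $p-1,p$ change at each step — transports $a_1$ to the bottom, yielding a chain $\tilde M$ with $\tilde x_1=a_1$. In the order the moves are applied, the performed word so far is $Q:=s_{p-1}s_{p-2}\cdots s_1$.

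Next I would recurse. The tail $a_1=\tilde x_1\lessdot\cdots\lessdot\tilde x_r=\hat1$ is a maximal chain of the upper interval $[a_1,\hat1]$, again a geometric lattice, now of rank $r-1$. The key supporting lemma is that the restriction of $\lambda$ to $[a_1,\hat1]$ is itself a minimal labeling of $[a_1,\hat1]$, for the ordering of its atoms $y$ (the rank-two flats of $L$ containing $a_1$) by the value $\mu(y):=\min(A(y)\setminus\{a_1\})$. This is verified by a short flat-join computation: if $a^\ast=\min(A(v)\setminus A(u))$ then $y^\ast=a_1\vee a^\ast$ is the $\mu$-minimal atom of $[a_1,\hat1]$ below $v$ but not below $u$, and $\mu(y^\ast)=a^\ast$, so the restricted label agrees with the interval's minimal label. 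Granting this, the induction hypothesis yields a reduced word in the generators $s_2,\dots,s_{r-1}$ (the transpositions fixing position $1$) whose $T$-moves carry $\tilde M|_{[a_1,\hat1]}$ to the ascending chain of $[a_1,\hat1]$; since the ascending chain of $L$ begins with $a_1$ and its tail is precisely the ascending chain of $[a_1,\hat1]$, these moves carry $\tilde M$ to the ascending chain of $L$. Denote this performed sub-word by $V'\in\langle s_2,\dots,s_{r-1}\rangle$.

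Finally I would assemble the two words and argue the concatenation is reduced. The total performed word is $V=QV'$. The one-line notation of $Q=s_{p-1}\cdots s_1$ is $[\,p,1,2,\dots,p-1,p+1,\dots,r\,]$, which is increasing on positions $2,\dots,r$; hence $Q$ is the minimal-length representative of its coset $Q\langle s_2,\dots,s_{r-1}\rangle$, and lengths add: $\ell(QV')=\ell(Q)+\ell(V')$, so $V$ is reduced. Equivalently, this is the statement that the wiring diagram of a selection sort has no double crossings, so reducedness also follows from Corollary~\ref{no-double}. Since the reverse of a reduced word is reduced, reversing $V$ gives the required reduced expression $s_{i_1}\cdots s_{i_d}$ with $T_{i_1}\cdots T_{i_d}(M)$ equal to the ascending chain, completing the induction (the base cases $r\le 1$ being trivial). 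I expect the main obstacle to be the supporting lemma that minimal labelings restrict to minimal labelings on upper intervals — their compatibility with matroid contraction — together with checking that the $T$-moves of the subproblem coincide with those of $L$ on the fixed bottom atom; the reducedness of the concatenation is then a clean parabolic-coset computation.
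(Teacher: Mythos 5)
Your proposal is correct, and at its core it is the same construction as the paper's: a selection sort that uses $T_{p-1},\dots,T_1$ to bubble the globally smallest label to the bottom and then sorts what remains inside the upper interval, resting on the same key fact that a minimal labeling of $L$ restricts to a minimal labeling of the geometric lattice $[a_1,\hat 1]$. (The paper asserts this restriction fact with only a brief justification; your version is more precise, specifying the atom order via $\mu(y)=\min\bigl(A(y)\setminus\{a_1\}\bigr)$ and checking $\mu(a_1\vee a^{\ast})=a^{\ast}$, and it is correct.) The genuine difference lies in the organization and in how reducedness is proved. The paper unrolls the entire sort and invokes its wiring-diagram criterion (Corollary~\ref{no-double}): each wire, once it has descended to its final position, never moves again, so no two wires cross twice. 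You instead run an induction on rank and prove reducedness by a parabolic-coset computation: $Q=s_{p-1}\cdots s_1$ has one-line notation $[p,1,2,\dots,p-1,p+1,\dots,r]$, increasing on positions $2,\dots,r$, hence is the minimal-length representative of its coset $Q\langle s_2,\dots,s_{r-1}\rangle$, so $\ell(QV')=\ell(Q)+\ell(V')$ and the concatenation of the two reduced words is reduced. What your route buys is tighter bookkeeping: the paper's ``apply this same reasoning repeatedly'' becomes an induction hypothesis, and the coset identity replaces a geometric argument about crossings. What it costs is exactly the two compatibility checks you flag, both of which do hold: the interval's $T$-moves agree with those of $L$ because the identification $\mu$ is order-preserving, so descents and the ascending chains of rank-two subintervals match; and the ascending chain of $L$ is $a_1$ followed by the ascending chain of $[a_1,\hat 1]$ because the lexicographically first maximal chain must have first label $a_1$, forcing its rank-one element to be $a_1$. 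Your final reversal step, needed because the paper's convention $T_{i_1}\cdots T_{i_d}(M)=T_{i_1}\cdots T_{i_{d-1}}(T_{i_d}(M))$ applies the rightmost operator first, is also handled correctly.
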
  

\begin{proof}
We will  construct a  
 sequence  of moves  $T_{i_d}, T_{i_{d-1}},\dots T_{i_1}$ 
  to apply to   
  $M$, and then we will  prove that $s_{i_1}\cdots s_{i_d}$ is a reduced expression for a permutation in $S_r$. 
   
Let $\lambda_1$ be the  smallest label appearing anywhere within the label sequence for $M$.  By virtue of how minimal labelings are defined, $\lambda_1$ must be the smallest
 atom in our set $A$ of atoms of $L$ (sometimes called the ground set).  
Unless $\lambda_1$  is the lowest label on $M$, it forms a descent with the label  $\mu $  just below it (i.e. on the cover relation just below the cover relation labeled 
$\lambda_1$).    When there is such a $\mu $ just below $\lambda_1$, consider the positive integer
$i_1\ge 1$ such that  $\mu $  is the label  on the cover relation of
$M$  between ranks  $i_1-1$ and $i_1$. 
 In this case, 
 we begin by  applying  the operator  $T_{i_1}$.  Observe that by definition of minimal labeling, by our choice of $\lambda_1$ as the overall smallest atom in $L$,  by the fact that
 geometric lattices are exactly the lattices of flats of simple matroids, 
 and by properties of lattices of flats of simple matroids (namely the notions of rank and of flat), 
 it follows that   $T_{i_1}(M)$ has the label $\lambda_1$ shifted downward one place in the label sequence 
 to between ranks $i_1-1$ and $i_1$ and  that $T_{i_1}(M)$ has a label $\mu '$  that is larger than $\lambda_1$  appearing just  above 
 $\lambda_1$ in $\lambda (T_{i_1}(M))$.  This label  $\mu '$ 
 does  not necessarily equal   $\mu $, 
 as  
 $\mu '$ 
  could be smaller than $\mu $.     Next we apply the operator 
    $T_{i_1-1}$, then $T_{i_1-2}$, and continue 
    applying progressively lower operators  in this manner up to and including applying $T_1$.  Since $\lambda_1$ is the 
    smallest element of our ground set, each  step  moves the label  $\lambda_1$ downward one position,  
    and each step puts a label that is larger than $\lambda_1$ just above $\lambda_1$.
 At the end of this process, 
  we will have a maximal chain  $T_1 T_2\cdots T_{i_1}(M)$ which has the label $\lambda_1$ on   its lowest cover relation. 
 Let 
     $M_2 = T_1 T_2\cdots T_{i_1}(M)$.   In the case where $M$ already  had $\lambda_1$ as its lowest label, let $M_2 = M$.   
      In either case, let $y_1$ denote the element of rank 1 in $M_2$. 

Let  $\lambda_2$ denote the second smallest label in  $\lambda (M_2)$.  Note that $\lambda_2$   is 
smaller than every  atom of $L$  not in $A(y_1)$ due to the fact that the smallest atom not in $A(y_1)$ must appear as a label on every saturated chain from $y_1$ to $\hat{1}$ by 
virtue of how minimal labels are defined.   Additionally, observe that $\lambda_2$ is the smallest label appearing on any cover relation $y_1\lessdot z$ for $z\in L$ and can only 
appear on one such cover relation; to see this, observe that  the interval  $[y_1,\hat{1}]$ in $L$ is itself a geometric lattice (due to being the lattice of flats of a simple matroid)  and 
that it has a minimal labeling coinciding  with the restriction of   our minimal labeling $\lambda $ 
to $[y_1,\hat{1}]$.  Notice that $\lambda_2$ is the smallest label in this minimal labeling for $[y_1,\hat{1}]$, 
allowing us to think of the aforementioned  $z$ covering $y_1$ with $\lambda (y_1,z) = \lambda_2$ as being the smallest atom $\lambda_2$ for $[y_1,\hat{1}]$.  Thus, we
may  apply 
the reasoning above regarding $\lambda_1$ to the downward shifting of the label  $\lambda_2$, as described next.  
 If $\lambda_2$ is not  immediately  above 
$\lambda_1$ in our label sequence  $\lambda (M_2)$, 
then denote by  $\mu_2$  the label just below $\lambda_2$ in $\lambda (M_2)$, and let $i_2\ge 1$ be the rank of the element  in $M_2$  appearing 
between the labels $\mu_2$ and 
$\lambda_2$.  Similarly to with $\lambda_1$, we next   apply the series of operators  $T_{i_2}$, then $T_{i_2-1} $, and continue proceeding  downward in this 
manner up to and including 
applying  $T_2$.  By the same reasoning as used above for $\lambda_1$, this sequence of moves  has the effect of 
shifting the label $\lambda_2$ downward  to the second lowest position in the label sequence in the resulting maximal chain 
$(T_2 T_3\cdots T_{i_2})(T_1T_2\cdots T_{i_1})(M)$, a maximal chain which we call 
$M_3$.   If $M_2$ already  has $\lambda_2$ as its lowest label within $[y_1,\hat{1}]$, then let $M_3=M_2$.
Let $y_2$ be the element of rank 2 in $M_3$, and observe that $y_1$ is the element of rank 1 in $M_3$.  

Next we similarly shift  
  the  third smallest label of $\lambda (M_3)$, denoted $\lambda_3$,  downward in this same manner  until  it is on a cover relation just above $y_2$.   Since 
$\lambda_3$ is the smallest atom not in $A(y_2)$, it will continue to be one of the labels in our label sequence  at each step 
as we shift it downward to the position just above $\lambda_2$.
Note that  $\lambda_3$  will  be  the third 
smallest  label in the label sequence on the  resulting maximal chain, denoted $M_4$, since  $\lambda_3$ 
is smaller than all labels appearing above it in  $\lambda (M_4)$ and is larger 
than all  labels appearing   below it; 
here  we use the fact  that 
each time we shift $\lambda_3$ downward past a label, the label that appears immediately above it after the shift must be larger than 
$\lambda_3$.  
Once we are done shifting $\lambda_3$, we likewise continue shifting  progressively larger and larger labels downward 
 in this manner until eventually  reaching  
a  maximal chain  whose label sequence is an ascending chain.  
That is, we apply 
to $M$ a series of  operators 
$\cdots (T_3\cdots T_{i_3-1}T_{i_3})(T_2\cdots T_{i_2-1}T_{i_2})(T_1\cdots T_{i_1-1}T_{i_1})$, applying these operators  from right to left.

Next we will  show  that   the expression 
$\cdots (s_3\cdots s_{i_3-1}s_{i_3})(s_2\cdots s_{i_2-1}s_{i_2})(s_1\cdots s_{i_1-1}s_{i_1})$ 
with the same series of indices  as $\cdots (T_3\cdots T_{i_3-1}T_{i_3})(T_2\cdots T_{i_2-1}T_{i_2})(T_1\cdots T_{i_1-1}T_{i_1})$ 
is a reduced expression for some $\pi\in S_r$. 
We  do this by showing   that the wiring diagram for this sequence of adjacent transpositions  
 has no double crossings.  That will allow us to apply Corollary ~\ref{no-double} to deduce that this is a reduced expression.   
 Since we are applying operators from right to left, one needs  to read the corresponding wiring diagram from right to left as well.
 
  First notice that the wire representing 
 the smallest label  $\lambda_1$  
 crosses distinct wires  at the various steps in which it crosses wires in the course of   shifting $\lambda_1$   downward  to  
  below all of the  other edge  labels.  
 Once this wire has shifted to below all other wires,   it  never crosses any further wires, due to $\lambda_1$ staying below all other labels during the remainder of our label shifting process.  Thus, this first wire to shift downward does not double cross any other wires.
 By the same reasoning, 
  the wire representing 
  the label $\lambda_2$  
  also crosses distinct wires as it shifts downward to the second lowest position,   and it does not cross any wires subsequent to that. 
  This wire representing $\lambda_2$ also does not double cross 
  the wire representing $\lambda_1$, as is  verified by noting that we already showed that the  $\lambda_1$ wire  does not double cross the   $\lambda_2$ wire.
  We apply this same reasoning repeatedly,  thereby showing  that the  wire representing each $\lambda_i$  in 
  turn only crosses each of the other wires at most once as the wire  representing $\lambda_i$ 
  moves downward and also does not double cross any wire  that crossed it prior to its downward shifting.  
  Thus, no two wires double cross, completing our proof.
\end{proof}



\begin{cor}\label{dist-to-ascending}
Given any geometric lattice $L$ of rank $r$ and any minimal labeling for $L$,  every maximal chain in $L$ is within 
graph-theoretic distance at most ${r \choose 2}$ of the unique  ascending chain of $L$,  with this distance calculated in the facet-ridge incidence graph of 
$\Delta (L)$.  
\end{cor}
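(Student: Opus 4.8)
The plan is to read this off directly from Lemma~\ref{assoc-an-expression}, which has already done the essential work. First I would fix an arbitrary maximal chain $M$ in $L$ and apply that lemma to obtain a permutation $\pi \in S_r$ together with a reduced expression $s_{i_1}\cdots s_{i_d}$ for $\pi$ such that $T_{i_1}\cdots T_{i_d}(M)$ is the unique ascending chain from $\hat{0}$ to $\hat{1}$. The whole corollary is then a matter of translating the algebraic length $d$ of this expression into a graph-theoretic distance and bounding it.

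The key translation step is to observe that each operator $T_{i_j}$ corresponds, by construction, to traversing a single directed edge of $G_{lex}$: applying $T_{i_j}$ replaces a descent by an ascent, producing a maximal chain that differs from its predecessor in exactly one element, hence a chain sharing a ridge with it. Since $G_{lex}$ is by Definition~\ref{G-lex} an oriented spanning subgraph of the facet-ridge incidence graph of $\Delta(L)$, each such move is in particular a single edge of the facet-ridge incidence graph. Thus the successive application of $T_{i_d}, T_{i_{d-1}}, \dots, T_{i_1}$ to $M$ traces out a walk of length exactly $d$ from the vertex $M$ to the vertex given by the ascending chain, and therefore the (undirected) distance in the facet-ridge incidence graph satisfies $d(M, M_{\mathrm{asc}}) \le d$, where $M_{\mathrm{asc}}$ denotes the ascending chain.

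Finally I would bound $d$ above by ${r \choose 2}$. Because $s_{i_1}\cdots s_{i_d}$ is a \emph{reduced} expression for $\pi$, the integer $d$ equals the Coxeter-theoretic length of $\pi$. Every element of $S_r$ has length at most that of the longest element, namely the reverse permutation, whose length is ${r \choose 2}$. Combining this with the inequality of the previous paragraph yields $d(M, M_{\mathrm{asc}}) \le d \le {r \choose 2}$, and since $M$ was arbitrary this is exactly the assertion of the corollary.

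I do not expect a genuine obstacle here, as the difficulty is concentrated in Lemma~\ref{assoc-an-expression}; the corollary is essentially a bookkeeping consequence. The only point deserving a moment of care is the claim that each $T_i$ step is a bona fide edge of the facet-ridge incidence graph rather than merely a formal edge of the auxiliary oriented graph $G_{lex}$, but this is immediate from $G_{lex}$ being defined as a spanning subgraph of that incidence graph, so the walk we construct really does live in the graph whose diameter we are bounding.
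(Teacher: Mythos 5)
Your proposal is correct and follows the same route as the paper's own proof: invoke Lemma~\ref{assoc-an-expression} to get a reduced expression whose length bounds the number of $T_i$-steps (each of which is an edge of the facet-ridge incidence graph), then bound that length by the length ${r \choose 2}$ of the longest element of $S_r$. The paper states this more tersely, but the content---including your careful remark that the $T_i$-moves are genuine edges of the incidence graph via $G_{lex}$ being a spanning subgraph---is identical.
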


\begin{proof}
We use that the longest  element  in $S_r$  has length ${r\choose 2}$ to deduce  the desired upper bound from Lemma ~\ref{assoc-an-expression}, since the length of a 
permutation is the length of each of its reduced expressions.  
\end{proof}

In light of the proof of  Lemma \ref{assoc-an-expression}, one might  hope  that the  sequence of adjacent transpositions associated to 
 any directed path in $G_{lex}$  gives rise to a reduced expression for a permutation in $S_r$.  One might further  hope  that  all of these monotone paths from a fixed
 maximal chain $M$ to  a fixed 
maximal chain $M'$   give rise to  reduced expressions  for the same permutation. 
 However, these statements are  not  always true, as the next  example shows. 

\begin{ex}
Consider the lattice of  flats for the uniform matroid $M_{2,4}$.  In other words, consider 
the poset of subsets of $\{ 1,2,3,4\}$ of size not equal to 3 ordered by containment.  
Take the atom ordering $a_1 < a_2 < a_3 < a_4$ where $a_i$ denotes  the subset $\{ i\} $.  Let $\lambda $ be the resulting minimal labeling,  using 
the label $i$ for the atom $a_i$ with label ordering $1<2<3<4$.  

 Let $M$ denote  the maximal chain $\emptyset < \{ 4\} < \{ 3,4\} < \{ 1,2,3,4\}$.  Note that $M$  has label sequence  $ \lambda (M) = (4,3,1)$.
 Observe also that 
$\lambda (T_2(M)) = (4,1,2)$, that $\lambda (T_1T_2(M)) = (1,4,2)$ and that $\lambda (T_2T_1T_2(M)) = (1,2,3)$.  Thus, 
$T_2T_1T_2(M)$ is the unique ascending chain with respect to $\lambda $. 

 This is not equal to $T_1T_2T_1(M)$, as we show in our calculations given next.  Notice that 
$\lambda (T_1(M)) = 
(3,4,1)$, that $\lambda (T_2T_1(M)) = (3,1,2)$ and that $\lambda (T_1T_2T_1(M)) = (1,3,2)$.  Thus,  $\lambda (T_1T_2T_1(M)) \ne \lambda (T_2T_1T_2(M))$, implying
$T_1T_2T_2(M) \ne T_2T_1T_2(M)$.    In other words, our operators do not uphold all of the braid relations for the symmetric group.  

If we apply $T_2$ to $T_1T_2T_1(M)$, we do get a maximal chain with label sequence $(1,2,3)$.  Thus, 
$\lambda (T_2T_1T_2T_1(M)) = \lambda (T_2T_1T_2(M)) = (1,2,3)$.  Since there is a unique ascending chain with respect to $\lambda $, this implies
$T_2T_1T_2T_1(M) = T_2T_1T_2(M)$.
\end{ex}



Next we 
show 
 that any two maximal chains in  a geometric lattice $L$ of rank $r$  
 are within distance ${r\choose 2}$ of each other in the facet-ridge incidence graph. 

\begin{thm}
The diameter of the facet-ridge incidence graph of a geometric lattice $L$ of rank $r$ is at most ${r \choose 2}$
\end{thm}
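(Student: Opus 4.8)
The plan is to deduce the theorem from Corollary~\ref{dist-to-ascending} by exploiting the freedom, guaranteed by Theorem~\ref{any-atom-ordering}, to choose the atom ordering \emph{after} the two chains have been fixed. Given maximal chains $M$ and $M'$ in $L$, I would first construct a total order on the atom set $A$ whose induced minimal labeling $\lambda$ makes $M'$ the unique ascending chain from $\hat{0}$ to $\hat{1}$. Granting this, Corollary~\ref{dist-to-ascending} applied to $\lambda$ produces a path of length at most $\binom{r}{2}$ from $M$ to the ascending chain $M'$ in the facet-ridge incidence graph of $\Delta(L)$. Since this graph, and hence its distance function, does not depend on the chosen labeling, we obtain $d(M,M')\le \binom{r}{2}$; maximizing over all pairs $M,M'$ then yields the stated diameter bound.

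The crux is therefore the realization claim that every maximal chain occurs as the ascending chain of some minimal labeling. To establish it, I would write $M'$ as $\hat{0}=u_0\lessdot u_1 \lessdot \cdots \lessdot u_r = \hat{1}$ and, for each $i$, choose an atom $b_i$ with $b_i\le u_i$ and $b_i\not\le u_{i-1}$. Such a $b_i$ exists because $L$ is atomic: if every atom below $u_i$ lay below $u_{i-1}$, then $u_i\le u_{i-1}$, contradicting $u_{i-1}\lessdot u_i$. The atoms $b_1,\dots,b_r$ are automatically distinct, since for $j<i$ we have $b_j\le u_j\le u_{i-1}$ while $b_i\not\le u_{i-1}$. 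I would then totally order $A$ so that $b_1<b_2<\cdots<b_r$, with all remaining atoms coming afterwards in arbitrary order, and let $\lambda$ be the resulting minimal labeling.

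Next I would check by induction along the chain that $\lambda$ makes $M'$ ascending. At stage $i$ the smallest atom not below $u_{i-1}$ is exactly $b_i$: every atom strictly below $b_i$ in our order is one of $b_1,\dots,b_{i-1}$, each of which lies below $u_{i-1}$, whereas $b_i\not\le u_{i-1}$. Hence $\lambda(u_{i-1},u_i)=\min_{a\in A(u_i)\setminus A(u_{i-1})} a = b_i$, using that $u_{i-1}\vee b_i=u_i$ because $u_i$ covers $u_{i-1}$ and $b_i\le u_i$. Thus the label sequence $\lambda(M')=(b_1,\dots,b_r)$ is strictly increasing, so $M'$ is an ascending chain from $\hat{0}$ to $\hat{1}$; by the EL property there is only one such chain, so $M'$ is the unique ascending chain, as required.

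The main obstacle I anticipate lies entirely in this realization claim, specifically in verifying that the greedy ascending chain of the constructed labeling tracks $M'$ at every step, that is, that the minimal-label cover out of $u_{i-1}$ really lands on $u_i$ rather than on a competing cover of $u_{i-1}$. This is where atomicity and the cover identity $u_{i-1}\vee b_i = u_i$ do the work, and the rest is bookkeeping. Everything downstream, namely the passage from the single ascending-chain estimate to the diameter and the appearance of $\binom{r}{2}$, is immediate from Corollary~\ref{dist-to-ascending} together with the length of the longest element of $S_r$.
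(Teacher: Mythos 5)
Your proposal is correct and follows essentially the same route as the paper: reduce to Corollary~\ref{dist-to-ascending} by constructing, for the target chain $M'$, an atom ordering whose minimal labeling (an EL-labeling by Theorem~\ref{any-atom-ordering}) makes $M'$ the unique ascending chain. The only difference is cosmetic: the paper orders \emph{all} atoms in blocks $A(x_i)\setminus A(x_{i-1})$ compatibly with the chain, while you select one witness atom $b_i$ per cover relation and place $b_1<\cdots<b_r$ first --- both constructions force the label sequence of $M'$ to be increasing, and your verification that $\lambda(u_{i-1},u_i)=b_i$ is sound.
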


\begin{proof}
Our plan is to show for any maximal chain $M$ in $L$ that that we can choose a minimal labeling that makes $M$ the unique ascending chain.  Once we do this, the result will follow from Corollary ~\ref{dist-to-ascending}.

Say that $M$ is the maximal chain $\hat{0} = x_0 \lessdot  x_1\lessdot  x_2\lessdot \cdots \lessdot x_{r-1}\lessdot x_r =  \hat{1}$.  Notice for $1\le i \le r$ that $A(x_i)\setminus A(x_{i-1})$ is nonempty since $L$ is atomic which means each $x_i$ is a join of atoms which must include one or more atoms  that are not less than or equal to $x_{i-1}$.  
Also notice by construction that 
$A(x_i)\setminus A(x_{i-1})$ has empty intersection with $A(x_j)\setminus A(x_{j-1})$ for $i\ne j$.  Finally, notice that $A(x_r)$ includes all of the atoms of $L$, implying that every atom belongs to $A(x_i)\setminus A(x_{i-1})$ for some $i$. 
Choose any total order on the set $A$ of  atoms of $L$  such that 
$a$ comes earlier than $a'$ for $a,a'\in A$ whenever $a\in A(x_i)\setminus A(x_{i-1})$ and $a'\in A(x_j)\setminus A(x_{j-1})$ for $i<j$.  Observe that the minimal labeling derived from any such total order on the atoms of $L$ will by construction have $M$ as an ascending chain.  By Theorem ~\ref{any-atom-ordering}, this minimal labeling will be an EL-labeling.  
\end{proof}


\begin{figure}
\begin{tikzpicture}
\node (bot) at (0,0) {$\emptyset$};

\node at (-0.67,1.5) (2) {$\{2\}$};
\node at (-2, 1.5) (1) {$\{1\}$};
\node at (0.67,1.5) (3) {$\{3\}$};
\node at (2,1.5) (4) {$\{4\}$};

\node[text=blue] at (-1.15,0.575) {$1$};
\node[text=red] at (1.15,0.575) {$4$};

\node at (-2.5,3) (124) {$\{1,2,4\}$};
\node at (-1,3) (13) {$\{1,3\}$};
\node at (1,3) (23) {$\{2,3\}$};
\node at (2.5,3) (34) {$\{3,4\}$};

\node[text=blue] at (-2.5,2.25) {$2$};
\node[text=red] at (2.5,2.25) {$3$};
\node[text=violet] at (0,2.425) {$1$};

\node at (0,4.5) (1234) {$\{1,2,3,4\}$};

\node[text=blue] at (-1.4,3.9) {$3$};
\node[text=red] at (1.4,3.9) {$1$};

\draw[ultra thick, blue]  (bot)--(1);
\draw (bot)--(2);
\draw (bot)--(3);
\draw[ultra thick, red] (bot)--(4);

\draw[ultra thick, blue]  (1)--(124);
\draw (2)--(124);
\draw[ultra thick, violet]  (4)--(124);
\draw (1)--(124);
\draw (1)--(13);
\draw (3)--(13);
\draw (2)--(23);
\draw (3)--(23);
\draw (3)--(34);
\draw[ultra thick, red] (4)--(34);

\draw[ultra thick, blue]  (124)--(1234);
\draw (13)--(1234);
\draw (23)--(1234);
\draw[ultra thick, red] (34)--(1234);
\end{tikzpicture}
\caption{A decreasing maximal in red at distance less than $\binom{r}{2}$ on the increasing maximal chain in blue.}
\label{fig:dec}
\end{figure}
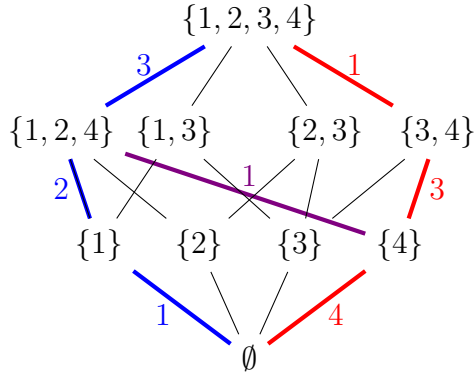

One could also  ask whether the facet associated to every  maximal chain with a decreasing label sequence has  distance exactly  $\binom{r}{2}$ from the facet given by the increasing chain.  This is not the case, as the next example illustrates.  

\begin{ex}\label{short-cuts}
Consider  the poset whose Hasse diagram is depicted in Figure~\ref{fig:dec}.
Here we have a poset of rank $r=3$.  The decreasing chain drawn in red is at a distance of $2$ from the increasing chain shown in blue, whereas ${r\choose 2} = 3$ in this case.
The one purple edge is used  in an intermediate step in moving from this decreasing chain to  the increasing chain.
The label sequences seen moving from this decreasing maximal chain to the increasing maximal chain are $(4,3,1)$, $(4,1,3)$, and $(1,2,3)$.
Notice in the last step the label sequence changes from $(4,1,3)$ to $(1,2,3)$, reducing the number of inversions by  $2$  in a single step.
\end{ex}


\begin{rk}
In fact, Example ~\ref{short-cuts} shows something more than discussed above.  It shows that there are adjacent vertices $u$ and $v$ in the facet-ridge incidence graph of a finite geometric lattice such that $G_{lex}$ does not include a directed edge from $u$ to $v$ but nonetheless the distance from $v$ to the sink vertex of $G_{lex}$  is strictly less than the distance from  $v$ to the sink in $G_{lex}$. 
 Thus, one may not a priori restrict attention to the  edges in 
 $G_{lex}$ for purpose of proving sharpness of our diameter  upper bound.  
\end{rk}

Now we turn to the question of sharpness of our  diameter upper bound. 

\begin{defn}
We say that an atom $a$ is  {\bf associated to a cover relation $u\lessdot v$}  whenever $a\in A(v)\setminus A(u)$.
\end{defn}
\begin{defn}
  We define an {\bf independent set} to be a set of atoms whose cardinality equals the rank of the join of this set of atoms.  This is equivalent to the notion of independent set  in the theory of matroids.  
\end{defn}

\begin{lem}\label{indep-atoms}
Consider any minimal labeling of any  geometric lattice $L$.  
Any set of atoms all associated to distinct cover relations in a maximal chain of $L$  is  an independent set.  
\end{lem}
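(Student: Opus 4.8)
The plan is to prove this as a purely order- and matroid-theoretic fact about the rank function, since the definition of an atom being \emph{associated to a cover relation} $u \lessdot v$ (namely $a \in A(v)\setminus A(u)$) does not actually reference the minimal labeling; the labeling is present only as ambient context. Write the maximal chain as $\hat{0} = x_0 \lessdot x_1 \lessdot \cdots \lessdot x_r = \hat{1}$, let $\rho$ denote the rank function of $L$ (so that $\rho(x_i) = i$), and let $S = \{b_1,\dots,b_k\}$ be the given atoms, where $b_j$ is associated to the cover relation $x_{i_j-1}\lessdot x_{i_j}$. Since the cover relations are required to be distinct, the indices $i_1,\dots,i_k$ are distinct, and I would relabel so that $i_1 < i_2 < \cdots < i_k$. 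I would first recall two standard consequences of semimodularity that I am permitted to use: (i) for any $y\in L$ and any atom $a$, the element $y\vee a$ either equals $y$ (when $a\le y$) or covers $y$, so that $\rho(y\vee a)\le \rho(y)+1$ with equality exactly when $a\not\le y$; and (ii) consequently, adjoining atoms one at a time raises the rank of the running join by at most one at each step, so the rank of a join of $k$ atoms is at most $k$, with equality precisely when every adjunction strictly increases the rank.

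The heart of the argument is a single monotonicity observation. Fix $j$ and consider the partial join $b_1\vee\cdots\vee b_{j-1}$. Because $i_1<\cdots<i_{j-1}<i_j$, every earlier index satisfies $i_\ell \le i_j-1$, and hence $b_\ell \le x_{i_\ell}\le x_{i_j-1}$ for all $\ell<j$ (using that $b_\ell \le x_{i_\ell}$ by the definition of association). Thus $x_{i_j-1}$ is an upper bound for $b_1,\dots,b_{j-1}$, and since the join is the least upper bound we obtain $b_1\vee\cdots\vee b_{j-1}\le x_{i_j-1}$. On the other hand $b_j\not\le x_{i_j-1}$, again by the definition of $b_j$ being associated to $x_{i_j-1}\lessdot x_{i_j}$. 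Therefore $b_j\not\le b_1\vee\cdots\vee b_{j-1}$.

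Combining this with fact (i), each step $b_1\vee\cdots\vee b_{j-1}\mapsto b_1\vee\cdots\vee b_j$ strictly increases the rank by exactly one, so by induction $\rho(b_1\vee\cdots\vee b_k)=k=|S|$, which by the definition of independent set is exactly the assertion that $S$ is independent. I do not expect a genuine obstacle here: the entire content lies in choosing to order the atoms by the rank of their associated cover relation and noticing that the earlier atoms are all trapped below $x_{i_j-1}$. The only points requiring care are to state precisely which standard properties of geometric lattices are being invoked (the covering property of semimodular lattices and the rank characterization of independence) and to confirm that the hypothesis of \emph{distinct} cover relations is exactly what forces the strict inequalities $i_\ell \le i_j-1$ driving the argument.
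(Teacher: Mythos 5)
Your proof is correct and follows essentially the same route as the paper's: both arguments walk up the maximal chain and show, using semimodularity, that each new atom lies below the next chain element but not below the previous one, so each partial join's rank increases by exactly one, giving $\rho(b_1\vee\cdots\vee b_k)=k$. The only (cosmetic) difference is that you treat an arbitrary set of distinct cover relations directly by ordering the associated indices, whereas the paper first proves the case of one atom per \emph{every} cover relation and then passes to general subsets via the matroid fact that subsets of independent sets are independent.
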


\begin{proof}
Given a maximal chain $M$, let $S$ be a subset of the atoms of $L$ consisting of exactly one atom associated to each cover relation in $M$.  
Proceeding up the  maximal chain $M$, each step upward makes the rank go up by one.  Thus, 
each step upward in $M$ increases by one  the  rank  of the join of the subset of  $S$ consisting of the atoms associated to the  cover relations 
encountered so far.  Each such step also increases the size of the set of atoms chosen so far by one.  This shows the desired independence  result for  sets of atoms associated to 
distinct consecutive cover relations in a maximal chain.  We get the result for an arbitrary set of cover relations in a maximal chain from the fact from matroid theory that any subset of an independent set is independent.  
\end{proof}




\begin{lem}\label{iff-lemma} 
Given a minimal labeling $\lambda $  for a  geometric lattice $L$  of rank $r$, 
consider the label sequence $(\mu_1,\dots ,\mu_r)$ on any maximal chain.  The increasing rearrangement of $(\mu_1,\dots ,\mu_r)$ is the label sequence of a maximal 
chain in $L$ 
 if and only if this increasing rearrangement is  the lexicographically smallest 
 increasing rearrangement of any label sequence of any maximal chain in $L$.  
\end{lem}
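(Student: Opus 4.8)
The plan is to translate the statement into the language of the simple matroid $\mathcal{M}$ whose lattice of flats is $L$, and then to reduce the whole equivalence to the standard fact that the greedy algorithm produces the lexicographically least basis of a matroid. I would first record an auxiliary observation, call it (A): the label sequence of every maximal chain of $L$ is, as a set of atoms, a basis of $\mathcal{M}$. Indeed, each label $\mu_i = \min(A(x_i)\setminus A(x_{i-1}))$ is an atom associated to the cover relation $x_{i-1}\lessdot x_i$, these cover relations are distinct, and the sets $A(x_i)\setminus A(x_{i-1})$ are pairwise disjoint; hence the $\mu_i$ are distinct and, by Lemma~\ref{indep-atoms}, independent, so $\{\mu_1,\dots,\mu_r\}$ is an independent set of size $r$, i.e. a basis. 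In particular the increasing rearrangement $\sigma$ of $(\mu_1,\dots,\mu_r)$ is exactly the sorted list of the elements of a basis. (Distinctness of the labels also guarantees that ``increasing rearrangement'' is a genuine strict reordering.)

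Next I would establish (B): the unique ascending chain of $L$ (unique by the EL-labeling property of Theorem~\ref{any-atom-ordering}) has label sequence equal to the greedy basis $g=\{g_1<g_2<\cdots<g_r\}$ of $\mathcal{M}$, obtained by repeatedly adjoining the smallest atom that increases the rank. Unwinding the definition of the minimal labeling, the label on the $i$-th cover relation of the ascending chain is $\min\bigl(A(x_i)\setminus A(x_{i-1})\bigr)$, which for the ascending chain is the smallest atom not lying in the flat $x_{i-1}$, i.e. the smallest atom not in the span of the previously chosen atoms; this is precisely the greedy choice, and these labels come out strictly increasing, so the ascending chain's label sequence is $g$ itself. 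I then invoke the standard matroid fact (C): the greedy basis $g$ is lexicographically least among all bases of $\mathcal{M}$ written in increasing order (see \cite{Bj2}). Combining (A), (B) and (C) and using that $g$ is itself a maximal-chain label sequence, we conclude that $g$ is the lexicographically least increasing rearrangement occurring among the label sequences of all maximal chains of $L$.

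With these three inputs the equivalence is short. For the forward direction, if $\sigma$ is itself the label sequence of a maximal chain $N$, then since $\sigma$ is increasing, $N$ is an ascending chain, hence $N$ is the unique ascending chain and $\sigma=g$, so $\sigma$ is the lexicographically least increasing rearrangement. For the reverse direction, if $\sigma$ is the lexicographically least increasing rearrangement among all maximal-chain label sequences, then since $g$ already realizes this minimum and a minimum in a total (lexicographic) order is unique, $\sigma=g$; as $g$ is the label sequence of the ascending chain, $\sigma$ is the label sequence of a maximal chain.

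The main obstacle — the only genuinely nontrivial input — is the combination of (B) and (C): one must know both that the ascending chain of the minimal labeling realizes the greedy basis and that greedy bases are lexicographically optimal among all bases. Everything else is bookkeeping: step (A) is immediate from Lemma~\ref{indep-atoms}, and the two directions then follow formally from uniqueness of the ascending chain and uniqueness of a lexicographic minimum. If one prefers to keep the argument self-contained rather than cite matroid greedy optimality, the fallback is to prove (C) directly by the coordinate-wise domination $g_i \le \tau_i$ (Gale's theorem) via a standard basis-exchange induction, but invoking the cited matroid fact is the cleaner route.
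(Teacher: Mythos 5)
Your proposal is correct, but it takes a genuinely different route from the paper's. The paper stays entirely inside the EL-labeling framework: for one direction it supposes some maximal chain has a sorted label sequence $(\nu_1,\dots ,\nu_r)$ lexicographically smaller than the ascending chain's, forms the chain of partial joins $\hat{0}\lessdot \nu_1\lessdot \nu_1\vee \nu_2\lessdot \cdots \lessdot \nu_1\vee \cdots \vee \nu_r$ (a maximal chain by Lemma~\ref{indep-atoms}), observes that by the very definition of minimal labeling this chain's label sequence is coordinate-wise at most $(\nu_1,\dots ,\nu_r)$, and thus produces a maximal chain lexicographically below the ascending chain, contradicting condition (2) in the definition of EL-labeling; the converse direction runs the same partial-join construction on the lex-smallest sorted label sequence and forces its labels to equal the $\nu_i$ by the same contradiction. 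You instead translate to matroid language: (A) label sequences are bases (the same use of Lemma~\ref{indep-atoms}), (B) the ascending chain realizes the greedy basis, and (C) the greedy basis is lexicographically (indeed coordinate-wise, by Gale's theorem) minimal among all sorted bases; both directions then follow formally from uniqueness of the ascending chain and uniqueness of a lexicographic minimum. What your route buys is modularity and conceptual clarity: it exposes the lemma as an instance of matroid greedy optimality. What it costs is twofold. First, you need step (B), which the paper's argument never requires, and your justification of it is terse: that the $i$-th label of the ascending chain is the globally smallest atom outside the flat $x_{i-1}$, and not merely the smallest element of $A(x_i)\setminus A(x_{i-1})$, needs the short argument that this atom must label some cover relation of the chain above $x_{i-1}$ and would create a descent if it appeared any later (the paper makes exactly this argument inside the proof of Lemma~\ref{assoc-an-expression}, so it is available, but it is not literally ``unwinding the definition''). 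Second, you import Gale's theorem as a black box (and \cite{Bj2} may not state it in the form you invoke), whereas the paper's self-contained partial-join-plus-domination technique is reused almost verbatim in Lemmas~\ref{rearranged-smallest} and~\ref{descent-swapping-lemma}, so proving the fact inline keeps the paper's toolkit uniform across those later arguments.
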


\begin{proof}
Since $\lambda $ is  an EL-labeling, every interval has  exactly one ascending label sequence.   
By Lemma ~\ref{indep-atoms}, the unique ascending chain label sequence $(\mu_1,\dots ,\mu_r)$ consists of the elements of an independent set  listed in ascending order.
 
 Suppose the  ascending rearrangement $\nu_1 < \cdots <\nu_r$ of some other   maximal chain label sequence has   $(\nu_1,\dots ,\nu_r)$  strictly smaller in 
 lexicographic order  than $(\mu_1,\dots ,\mu_r)$.
   Then according to the definition of EL-labeling,  $(\nu_1,\dots ,\nu_r)$  
could not be the  label sequence of any maximal chain.    By Lemma ~\ref{indep-atoms}, $\{ \nu_1,\dots ,\nu_r\} $ is an independent set,  implying  that
$\hat{0}\lessdot \nu_1 \lessdot \nu_1\vee \nu_2 \lessdot \cdots \lessdot \nu_1\vee\cdots \vee \nu_r$ is a maximal
chain.  By definition of minimal  labeling, this maximal chain  has a label 
sequence $(\rho_1,\dots ,\rho_r)$ with  $\rho_i\le \nu_i$ for $i=1,2,\dots ,r$.  In particular, it has a label sequence
that is smaller lexicographically than  $(\mu_1,\dots ,\mu_r)$, 
a contradiction.  
 This shows that  the lexicographically first  maximal chain 
 label sequence is at least as small in lexicographic order  as  the  increasing rearrangement of the  label sequence of  every other maximal
 chain. 

Conversely, consider the  lexicographically smallest increasing rearrangement of any maximal chain label sequence.  By Lemma
~\ref{indep-atoms}, this  is a sequence $(\nu_1,\dots ,\nu_r)$ whose 
labels comprise an independent set.
 Note that 
$\nu_1\lessdot \nu_1\vee \nu_2 \lessdot \cdots \lessdot \nu_1\vee\cdots \vee\nu_r$  is a maximal chain;  moreover, it has label sequence
$(\nu_1,\dots ,\nu_r)$ unless it 
has a label sequence $(\rho_1,\dots ,\rho_r)$ with some 
$\rho_i < \nu_i$ and with $\rho_i\le \nu_i$ for $i=1,2,\dots ,n$.  
But given such a label sequence $(\rho_1,\dots ,\rho_r)$ not equalling $(\nu_1,\dots ,\nu_r)$, this would imply that  
the increasing rearrangement of $(\rho_1,\dots ,\rho_r)$ would  be lexicographically smaller  
than $(\nu_1,\dots ,\nu_r)$, 
contradicting our  lexicographic  minimality assumption for  the label sequence $(\mu_1,\dots ,\nu_r)$ on the unique ascending chain of $L$.
\end{proof}

\begin{lem}\label{rearranged-smallest} 
Given a   geometric lattice of rank $r$ with a minimal labeling, 
every possible rearrangement of the unique  maximal chain  label sequence that is ascending  
arises as the label sequence of  some maximal chain.
\end{lem}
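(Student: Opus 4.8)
The plan is to show, for each permutation $\sigma$ of $\{1,\dots,r\}$, that a particular greedy join-chain built from the atoms of the ascending chain realizes the rearrangement $(\mu_{\sigma(1)},\dots,\mu_{\sigma(r)})$ as its label sequence. First I would record the two facts I will lean on. Writing $(\mu_1,\dots,\mu_r)$ for the label sequence of the unique ascending chain, Lemma~\ref{indep-atoms} shows that $\mu_1 < \cdots < \mu_r$ form an independent set; since there are $r$ of them in a rank-$r$ lattice, they form a basis. And by Lemma~\ref{iff-lemma}, the increasing sequence $(\mu_1,\dots,\mu_r)$ is the lexicographically smallest increasing rearrangement of the label sequence of any maximal chain in $L$. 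Fixing $\sigma$, I would set $F_k = \mu_{\sigma(1)} \vee \cdots \vee \mu_{\sigma(k)}$. Because every subset of an independent set is independent, each $F_k$ has rank $k$ and $F_{k-1} \lessdot F_k$, so $\hat{0} = F_0 \lessdot F_1 \lessdot \cdots \lessdot F_r = \hat{1}$ is a maximal chain.

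The heart of the argument is to verify that $\lambda(F_{k-1},F_k) = \mu_{\sigma(k)}$ for each $k$. The atom $\mu_{\sigma(k)}$ certainly lies in $A(F_k)\setminus A(F_{k-1})$, so by the definition of minimal labeling it suffices to rule out an atom $a < \mu_{\sigma(k)}$ with $a \in A(F_k)\setminus A(F_{k-1})$. I would argue by contradiction: given such an $a$, both $F_{k-1}\vee a$ and $F_{k-1}\vee \mu_{\sigma(k)} = F_k$ cover $F_{k-1}$, and $F_{k-1}\vee a \le F_k$ forces $F_{k-1}\vee a = F_k$, whence $\mu_{\sigma(k)} \le F_{k-1}\vee a$. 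Replacing $\mu_{\sigma(k)}$ by $a$ in the basis $\{\mu_1,\dots,\mu_r\}$ then produces a set $N$ that still joins to $\hat{1}$ and still has $r$ elements (one checks $a$ is not already one of the $\mu_j$, using $a \not\le F_{k-1}$ and independence of the $\mu_j$), so $N$ is again a basis.

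To finish I would compare increasing rearrangements. Since $a < \mu_{\sigma(k)}$ and all other elements of $N$ coincide with the $\mu_j$, the increasing rearrangement of $N$ is lexicographically strictly smaller than $(\mu_1,\dots,\mu_r)$. Building the maximal chain $\hat{0} \lessdot c_1 \lessdot c_1\vee c_2 \lessdot \cdots$ from the elements $c_1 < \cdots < c_r$ of $N$ in increasing order, the definition of minimal labeling gives a label sequence $(\rho_1,\dots,\rho_r)$ with $\rho_i \le c_i$ for all $i$, exactly as in the proof of Lemma~\ref{iff-lemma}. Sorting, the increasing rearrangement of $(\rho_1,\dots,\rho_r)$ is componentwise at most, hence lexicographically at most, the increasing rearrangement of $N$, which is strictly smaller than $(\mu_1,\dots,\mu_r)$. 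This contradicts the lexicographic minimality supplied by Lemma~\ref{iff-lemma}. Hence no such $a$ exists, each label equals $\lambda(F_{k-1},F_k)=\mu_{\sigma(k)}$, and the chain $F_0 \lessdot \cdots \lessdot F_r$ realizes $(\mu_{\sigma(1)},\dots,\mu_{\sigma(r)})$. As $\sigma$ ranges over all permutations, every rearrangement of the ascending label sequence occurs.

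I expect the main obstacle to be the label computation in the second paragraph: proving that the greedy join-chain for a prescribed ordering of the basis atoms picks up exactly those atoms as labels, rather than smaller ``accidental'' atoms that happen to lie in the flats $F_k$. The covering/exchange step promotes such an accidental atom into an exchanged basis, and the lexicographic-minimality contradiction rules it out, so the delicate point is arranging this exchange so that it engages Lemma~\ref{iff-lemma} cleanly.
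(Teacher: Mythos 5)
Your proof is correct and is built from the same ingredients as the paper's---the greedy join chain $F_k=\mu_{\sigma(1)}\vee\cdots\vee\mu_{\sigma(k)}$, Lemma~\ref{indep-atoms}, the fact that a minimal labeling assigns to $F_{k-1}\lessdot F_k$ the minimum of $A(F_k)\setminus A(F_{k-1})$, and a lexicographic contradiction against Lemma~\ref{iff-lemma}---but you assemble the contradiction through an avoidable detour. The paper's proof notes that since $\mu_{\sigma(j)}\in A(F_j)\setminus A(F_{j-1})$ for every $j$, the labels of $F_\bullet$ itself satisfy $\lambda(F_{j-1},F_j)\le \mu_{\sigma(j)}$ componentwise; hence if equality failed at any step, the increasing rearrangement of $F_\bullet$'s \emph{own} label sequence would already be lexicographically smaller than $(\mu_1,\dots,\mu_r)$, contradicting Lemma~\ref{iff-lemma} at once. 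You instead take the offending atom $a<\mu_{\sigma(k)}$, exchange it into the basis to form $N$, and rerun the domination-and-sorting argument on the greedy chain built from sorted $N$. Your supporting checks are all sound ($F_{k-1}\vee a=F_k$ by the covering argument, $a\notin\{\mu_1,\dots,\mu_r\}$ via $a\not\le F_{k-1}$ and independence, $\bigvee N=\hat{1}$ via $\mu_{\sigma(k)}\le F_{k-1}\vee a$, and cardinality $r$ together with a join of rank $r$ forcing $N$ to be a basis), so the argument goes through; it simply constructs an auxiliary chain to witness a violation that the chain $F_\bullet$ already witnesses, because the offending $a$ is itself the label $\lambda(F_{k-1},F_k)$. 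What the paper's shortcut buys is a proof about half the length; what your exchange step makes explicit is the matroid basis-exchange mechanism lurking behind the statement, which is conceptually illuminating but not needed to close the argument.
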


\begin{proof}
Let $(\lambda_1,\dots ,\lambda_r)$ denote the label
sequence for the unique ascending chain.  Consider  the  label sequence $(\lambda_{\sigma (1)},\dots ,\lambda_{\sigma (r)})$ 
given by any  permutation $\sigma \in S_r$. 
By Lemma ~\ref{indep-atoms}, $\{ \lambda_1,\dots ,\lambda_r\} $ is an independent set.  Thus, 
$$\hat{0}\lessdot \lambda_{\sigma (1)} \lessdot \lambda_{\sigma (1)} \vee \lambda_{\sigma (2)}\lessdot \cdots \lessdot 
\lambda_{\sigma (1)} \vee\cdots \vee \lambda_{\sigma (r)} $$ is a maximal chain.
We will show that  $(\lambda_{\sigma (1)},\dots ,\lambda_{\sigma (r)})$  is the label sequence for this  maximal chain.  
By definition of a minimal
labeling, we know that every label  $\mu_j$  in the label sequence
 $(\mu_1,\dots ,\mu_r)$  for this maximal chain  must satisfy $\mu_j\le \lambda_{\sigma (j)}$.  From this it follows that the ascending 
rearrangement of this  label sequence  
would be strictly smaller than $(\lambda_1,\dots ,\lambda_r)$ in lexicographic order if we had $\mu_j < \lambda_{\sigma_j}$ for any  $j\in \{ 1,\dots ,r\} $. 
 However, $\{ \mu_1,\dots ,\mu_r\} $ 
 must also be an independent set of 
size $r$,  by Lemma \ref{indep-atoms}.    By Lemma ~\ref{iff-lemma}, having $\mu_j < \lambda_{\sigma_j}$ for some $j$ would therefore
contradict  $(\lambda_1,\dots ,\lambda_r)$ being  the unique ascending chain.  
\end{proof}

\begin{lem}\label{descent-swapping-lemma} 
Consider a   geometric lattice $L$  of rank $r$ with a minimal labeling $\lambda $.
Consider a maximal chain $M$ whose label sequence is a rearrangement of the label sequence on the unique ascending chain in $L$.  
Consider any $u_{i-1}\lessdot u_i \lessdot u_{i+1}$ in $M$ such that there is a  descent  $\lambda (u_{i-1},u_i) > \lambda (u_i,u_{i+1})$ in
the label sequence for $M$.    
  Then $(\lambda (u_i,u_{i+1}), \lambda (u_{i-1},u_i))$ 
is  the lexicographically smallest label
sequence on any saturated chain within  
$[u_{i-1},u_{i+1}]$.  
\end{lem}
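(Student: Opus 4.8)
The plan is to analyze the rank-two interval $[u_{i-1},u_{i+1}]$ directly through the minimal-labeling formula $\lambda(x,y)=\min_{a\in A(y)\setminus A(x)}a$, and then to invoke the rearrangement hypothesis via Lemma~\ref{iff-lemma} to pin down the one remaining label. First I would set $m=\min\bigl(A(u_{i+1})\setminus A(u_{i-1})\bigr)$, the smallest atom that must be used up somewhere on any saturated chain of the interval; note that every atom of $A(u_i)\setminus A(u_{i-1})$ and of $A(u_{i+1})\setminus A(u_i)$ lies in $A(u_{i+1})\setminus A(u_{i-1})$, so both $a=\lambda(u_{i-1},u_i)$ and $b=\lambda(u_i,u_{i+1})$ are at least $m$. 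Since $(a,b)$ is a descent, I would observe that $m\not\le u_i$: otherwise $m\in A(u_i)\setminus A(u_{i-1})$ would force $a=\min\bigl(A(u_i)\setminus A(u_{i-1})\bigr)=m\le b$, contradicting $a>b$. From $m\not\le u_i$ it follows that $m\in A(u_{i+1})\setminus A(u_i)$, whence $b=\min\bigl(A(u_{i+1})\setminus A(u_i)\bigr)=m$; that is, the top label of the descent is exactly $m$.

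Next I would locate the ascending chain of the interval. Put $z^{*}=u_{i-1}\vee m$; since $m\not\le u_{i-1}$, this $z^{*}$ covers $u_{i-1}$ and is a coatom of the interval distinct from $u_i$ (as $m\le z^{*}$ but $m\not\le u_i$). A direct computation with the minimal-labeling formula gives $\lambda(u_{i-1},z^{*})=m$ and $\lambda(z^{*},u_{i+1})=t$, where $t=\min\bigl(A(u_{i+1})\setminus A(z^{*})\bigr)>m$; so the chain through $z^{*}$ has the ascending label sequence $(m,t)$, and by uniqueness of ascending chains (the minimal labeling is an EL-labeling by Theorem~\ref{any-atom-ordering}) it is the ascending chain of $[u_{i-1},u_{i+1}]$ and carries the lexicographically smallest label sequence there. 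Since $a\le u_i$, $a\not\le u_{i-1}$, and $u_i\wedge z^{*}=u_{i-1}$ (distinct coatoms of a rank-two interval meet in its bottom), we get $a\not\le z^{*}$, so $a\in A(u_{i+1})\setminus A(z^{*})$ and hence $t\le a$. The lemma will therefore follow once I show $t=a$, for then the interval's ascending, hence lexicographically smallest, label sequence is exactly $(m,a)=(b,a)$.

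The crux is showing $t=a$, and this is where the hypothesis that $\lambda(M)$ is a rearrangement of the ascending label sequence $(\lambda_1,\dots,\lambda_r)$ of $L$ enters; without it $t$ can be strictly smaller than $a$ (for instance in a rank-two lattice of a uniform matroid). Suppose for contradiction that $t<a$. Replacing $u_i$ by $z^{*}$ yields a maximal chain $M'$ of $L$ whose label sequence agrees with $\lambda(M)$ except that the pair $(a,m)$ on the two cover relations of the interval is replaced by $(m,t)$; as a multiset, $\lambda(M')$ is $\lambda(M)$ with the single label $a$ replaced by the strictly smaller label $t$. Because $\lambda(M)$ is a rearrangement of the independent set $\{\lambda_1<\cdots<\lambda_r\}$ of distinct atoms (Lemma~\ref{indep-atoms}), strictly lowering one entry strictly lowers the increasing rearrangement in lexicographic order, so the increasing rearrangement of $\lambda(M')$ is strictly smaller than $(\lambda_1,\dots,\lambda_r)$. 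This contradicts Lemma~\ref{iff-lemma}, which says that $(\lambda_1,\dots,\lambda_r)$ is the lexicographically smallest increasing rearrangement of the label sequence of any maximal chain of $L$. Hence $t=a$, and the proof is complete.

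The main obstacle I anticipate is conceptual rather than computational: recognizing that the purely local claim is false in general and that the global rearrangement hypothesis is indispensable. The remaining technical work is the bookkeeping confirming that swapping $u_i$ for $z^{*}$ produces a genuine maximal chain whose multiset of labels differs from that of $M$ by a single strict decrease, so that Lemma~\ref{iff-lemma} applies cleanly.
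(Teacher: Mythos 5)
Your proof is correct and takes essentially the same approach as the paper: both arguments build the alternative saturated chain through $u_{i-1}\vee\lambda(u_i,u_{i+1})$, splice it into $M$ and invoke Lemma~\ref{iff-lemma} together with the rearrangement hypothesis to force its label sequence to be exactly $(\lambda(u_i,u_{i+1}),\lambda(u_{i-1},u_i))$, then conclude via the uniqueness and lexicographic minimality of ascending chains in the rank-two interval. The only differences are minor bookkeeping: you establish $\lambda(u_{i-1},u_i)\not\le u_{i-1}\vee\lambda(u_i,u_{i+1})$ via the meet of two distinct coatoms where the paper cites the independence argument of Lemma~\ref{indep-atoms}, and you compute the lower label exactly rather than only bounding both labels from above.
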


\begin{proof}
Observe that $\lambda (u_i,u_{i+1}) \in A(u_{i-1} \vee \lambda (u_i,u_{i+1}))\setminus A(u_{i-1})$.  Also note that 
$\lambda (u_{i-1},u_i) \not\in A(u_{i-1} \vee \lambda (u_i,u_{i+1}))$ due to 
the union of  the set  $\{ \lambda (u_i,u_{i+1}), \lambda (u_{i-1},u_i)\} $ and the set of labels on 
any saturated chain from $\hat{0}$ to $u_{i-1}$ being an independent set, hence having join of strictly greater rank than 
$u_{i-1} \vee \lambda (u_i,u_{i+1})$.  Since $\lambda (u_{i-1},u_i)\in A(u_{i+1})$, we have 
$\lambda (u_{i-1},u_i)\in A(u_{i+1})\setminus A(u_{i-1} \vee \lambda (u_i,u_{i+1}))$.  

Equipped with these observations, 
we will show that 
$(\lambda (u_i,u_{i+1}), \lambda (u_{i-1},u_i))$ is the label sequence for the  
 saturated chain  $u_{i-1}  \lessdot u_{i-1} \vee \lambda (u_i,u_{i+1}) \lessdot u_{i+1}$
  in $[u_{i-1},u_{i+1}]$. Otherwise this saturated chain  would have a lexicographically 
smaller label sequence than $(\lambda (u_i,u_{i+1}),\lambda (u_{i-1},u_i))$.  
This  sequence  would  combine with the labels  on the  cover relations  of $M$ appearing below and above 
$[u_{i-1},u_{i+1}]$  to 
give a maximal chain whose  label sequence has an ascending rearrangement that  is strictly smaller than the label sequence of the  unique 
ascending chain of $L$.  By Lemma ~\ref{iff-lemma}, this gives a contradiction.  
\end{proof}



 

Next is a result providing evidence that our  upper bound  of ${r\choose 2}$ on the diameter of the facet-ridge incidence graph is sharp.


\begin{thm}
Consider any  geometric lattice $L$ of rank $r$.  Consider  the  minimal labeling $\lambda $ on $L$ induced by any choice of atom ordering for $L$, and 
consider   the  
directed graph $G_{lex}$ as in  Definition ~\ref{G-lex}   that results from the EL-labeling $\lambda $.  
$G_{lex}$ contains a vertex whose distance  within $G_{lex}$ 
 to the unique ascending chain   in $L$ 
 is exactly ${r\choose 2}$.  
\end{thm}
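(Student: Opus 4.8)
The plan is to take as the target vertex the maximal chain whose label sequence is the \emph{reverse} of the ascending one, and then to show that the number of inversions of the label sequence is a monovariant that drops by exactly one along every directed edge of $G_{lex}$ issuing from such a rearrangement. Concretely, let $(\lambda_1 < \lambda_2 < \cdots < \lambda_r)$ be the label sequence of the unique ascending chain of $L$. By Lemma~\ref{indep-atoms} these labels are distinct, so the number of inversions of any rearrangement of them is well defined with respect to the order $\lambda_1 < \cdots < \lambda_r$. By Lemma~\ref{rearranged-smallest} the reversed sequence $(\lambda_r, \lambda_{r-1}, \dots, \lambda_1)$ occurs as the label sequence of some maximal chain $M_0$, and this sequence has exactly $\binom{r}{2}$ inversions. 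I claim the distance from $M_0$ to the ascending chain within $G_{lex}$ is exactly $\binom{r}{2}$.

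The heart of the argument is a closure-and-transposition property. Suppose a maximal chain $M$ has a label sequence that is a rearrangement of $(\lambda_1, \dots, \lambda_r)$, and that a directed edge of $G_{lex}$ acts via $T_i$ at a descent $\lambda(u_{i-1},u_i) > \lambda(u_i,u_{i+1})$. By Lemma~\ref{descent-swapping-lemma} the ascending (lexicographically smallest) label sequence on $[u_{i-1},u_{i+1}]$ is exactly $(\lambda(u_i,u_{i+1}), \lambda(u_{i-1},u_i))$, so replacing the descent by this ascending chain simply transposes the two adjacent labels that formed the descent. Hence $T_i(M)$ is again a rearrangement of $(\lambda_1,\dots,\lambda_r)$, and its inversion number is exactly one less than that of $M$, since swapping an adjacent out-of-order pair of distinct values removes exactly one inversion. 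Inducting on the number of moves starting from $M_0$, every vertex reachable from $M_0$ along directed edges of $G_{lex}$ is a rearrangement of the ascending labels, and every such edge lowers the inversion number by precisely one.

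To finish I would assemble the count. Since $G_{lex}$ orients its edges in the direction of strictly decreasing lexicographic order, it is acyclic; following directed edges out of $M_0$ within its finite forward orbit therefore terminates at a rearrangement with no descent, namely the sorted sequence $(\lambda_1,\dots,\lambda_r)$, i.e.\ the ascending chain. Thus a directed path from $M_0$ to the ascending chain exists, and by the previous paragraph any such directed path has length equal to the total drop in inversion number along it, which is $\binom{r}{2} - 0 = \binom{r}{2}$. In particular the directed distance from $M_0$ to the ascending chain in $G_{lex}$ is exactly $\binom{r}{2}$, which is the assertion.

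The main obstacle is the closure-and-transposition step: one must guarantee that the $G_{lex}$-moves never leave the set of rearrangements of the ascending label sequence, and that each move is genuinely a single adjacent transposition rather than some more complicated relabeling on the interval $[u_{i-1},u_{i+1}]$. This is precisely where Lemma~\ref{descent-swapping-lemma} does the essential work, and it in turn relies on the independence (hence distinctness) of the ascending labels from Lemma~\ref{indep-atoms} together with the EL-property. Without this control, an edge could in principle alter labels outside the two swapped positions, the orbit might escape the rearrangement class, and the clean ``exactly one inversion per edge'' accounting---and with it the sharp lower bound of $\binom{r}{2}$---would break down. The matching upper bound is automatic from Corollary~\ref{dist-to-ascending}, but here it also falls out of the same inversion count.
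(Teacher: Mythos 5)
Your proposal is correct and follows essentially the same route as the paper: take the reversal $(\lambda_r,\dots,\lambda_1)$ of the ascending label sequence (which exists by Lemma~\ref{rearranged-smallest}), and use Lemma~\ref{descent-swapping-lemma} to show every $G_{lex}$-edge out of a rearrangement of the ascending labels is an adjacent transposition dropping the inversion count by exactly one, so every directed path to the ascending chain has length exactly $\binom{r}{2}$. If anything, your write-up makes explicit two points the paper leaves implicit --- the induction keeping the forward orbit inside the rearrangement class, and the acyclicity/termination argument guaranteeing a directed path exists (the paper instead cites the path built in Lemma~\ref{assoc-an-expression}).
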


\begin{proof}
We have already proven in Corollary ~\ref{dist-to-ascending} 
 that the diameter of the facet-ridge incidence graph of any finite  geometric lattice of rank $r$  is at most ${r\choose 2}$.  Moreover, we did so by choosing any  maximal chain $C'$
 in $L$  to be the ascending chain and then exhibiting 
for every  maximal chain  $C$ in $L$    that there is a path of  length at most ${r\choose 2}$ from $C$ to $C'$ in the facet-ridge incidence graph.  
One may observe that this path appearing in 
the proof of Lemma ~\ref{assoc-an-expression} was constructed  in such a way that it   only used  directed 
edges that are present in $G_{lex}$.  This allows  us to deduce this same upper bound of ${r\choose 2}$  for the distance from any vertex in $G_{lex}$ to the unique sink in $G_{lex}$.
 What remains is to prove that  this bound  is sharp. 

 Let $\hat{0} = x_0 \lessdot x_1\lessdot x_2 \lessdot \cdots \lessdot x_{r-1}\lessdot x_r =  \hat{1}$ be our unique  ascending chain with respect to the minimal labeling $\lambda $ 
 given by our choice of atom ordering.
    Let $(\lambda_1,\dots ,\lambda_r)$ be the label sequence for this 
  ascending chain. 
  Lemma ~\ref{rearranged-smallest} implies that $(\lambda_r,\dots ,\lambda_1)$  is also  the label sequence of a maximal chain in $L$. 
  %
  Our plan is to  
  show that  each directed  edge in every  directed path  in  $G_{lex}$  from any  maximal  chain with label sequence 
  $(\lambda_r,\dots ,\lambda_1)$ to the ascending chain labeled $(\lambda_1,\dots ,\lambda_r)$ eliminates   exactly one  inversion from the label sequence. 
      The result will  then
  follow from the fact that $(\lambda_r,\dots ,\lambda_1)$ has exactly ${r\choose 2}$ inversions while $(\lambda_1,\dots ,\lambda_r)$ has 0 inversions.
  
  Each directed edge $M\rightarrow M'$ 
  replaces a descent $(j,i)$ for $j>i$ in the label sequence $\lambda (M)$ with an ascent of the form $(i,j')$ for some $j'>i$ in the label sequence $\lambda (M')$. 
     By virtue of how a minimal labeling is constructed, 
  we must have $j'\le j$, as we justify next.   Let 
  $u\lessdot v\lessdot w$ be the pair of cover relations in $M'$  with $\lambda (u,v) = i$ and $\lambda (v,w) = j'$, and let 
  $u\lessdot v'\lessdot w$ be the pair of cover relations in $M$ with $\lambda (u,v') = j$ and $\lambda (v',w)=i$.  Observe that 
  $i\not\in A(v')$  
  which implies 
  $j\not \in A(v)$ while  it is also true that $j\in A(w)$; thus, $j\in A(w)\setminus A(v)$ implying $j' = \min (A(w)\setminus A(v) ) \le j$.   This proves our claim.  
 
    If $j'=j$ for each directed edge in $G_{lex}$ under consideration, then
  the result follows from the fact that   
  swapping $i$ and $j$ eliminates  the $(j,i)$ inversion while preserving all other inversions.    
But  our choice of a  decreasing chain 
   having the same set of labels as the ascending chain  
   ensures we will have $j'=j$ at each directed edge  under consideration, 
   by Lemma ~\ref{descent-swapping-lemma}. 
 \end{proof}


\section{Connection to maximal chain descent orders}

Next we relate the graph $G_{lex}$ to a partial order known as the maximal chain descent order which was recently introduced by Stephen Lacina in \cite{SL}.

\begin{defn}
Consider any  finite bounded poset $P$ with an EL-labeling $\lambda $.  Define a partially ordered set  called 
the {\bf maximal chain descent order}  given by $P$ and $\lambda $,  denoted $P_{\lambda } (2)$,   as follows.  The elements of $P_{\lambda }(2)$
are the maximal chains in $P$.  The order relations for $P_{\lambda }(2)$ 
 are obtained by taking the transitive closure of the following order relations which we call ``polygon moves''.  We have a polygon move from  a maximal chain
 $v$ in $P$  to maximal chain  $u$ in $P$  whenever $u$ may be obtained from $v$ by deleting from $v$ a  single poset  element $x_i$  such that $v$  includes elements 
 $x_{i-1}\lessdot x_i \lessdot x_{i+1}$ with a descent at $x_i$,   and then 
replacing $x_i$ with the unique ascending chain  in $P$  from $x_{i-1}$ to $x_{i+1}$ with respect to $\lambda $.  
\end{defn}

\begin{prop}
Consider the  oriented subgraph  $G_{lex}$ of the facet-ridge incidence graph 
 of a  geometric lattice $L$  with respect to a minimal labeling $\lambda $.
Then $G_{lex}$ is exactly the Hasse diagram of the maximal chain descent order given by  $L$ and $\lambda $. 
\end{prop}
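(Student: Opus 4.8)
The plan is to identify the directed edges of $G_{lex}$ with the polygon moves defining $P_{\lambda}(2)$, and then to prove that the polygon moves are precisely the cover relations of $P_{\lambda}(2)$. The first identification is essentially definitional: a directed edge of $G_{lex}$ (Definition~\ref{G-lex}) is obtained from a maximal chain $M$ by choosing a descent $x_{i-1}\lessdot x_i\lessdot x_{i+1}$ and replacing $x_i$ by the unique element of $[x_{i-1},x_{i+1}]$ that produces an ascent, whereas a polygon move deletes $x_i$ and inserts the unique ascending chain of $[x_{i-1},x_{i+1}]$. Since $[x_{i-1},x_{i+1}]$ is a rank-two interval, every saturated chain in it, in particular the ascending one, has exactly one intermediate element, so the two operations coincide and produce maximal chains differing in a single element (a facet-ridge edge). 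Thus the edge set of $G_{lex}$ equals the set of polygon moves, and it remains to show these are exactly the Hasse-diagram edges of $P_{\lambda}(2)$.

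For the easy half I would first record that a polygon move strictly lowers the label sequence lexicographically: if $(a,b)$ is the descent being resolved, with $a>b$, then the first label $c$ of the ascending chain of $[x_{i-1},x_{i+1}]$ satisfies $c<a$ (if $c=a$, then since the ascending chain is an ascent its second label would exceed $b$, so its label sequence would not be lexicographically smallest, contradicting the EL-property), while all labels at lower positions are unchanged. Hence every polygon move strictly decreases $\lambda(M)$, so there are no directed cycles, $P_{\lambda}(2)$ is a genuine poset contained in the lexicographic order on label sequences, and any relation obtained as a composite of two or more polygon moves has a strict intermediate. Therefore every cover relation of $P_{\lambda}(2)$ is a single polygon move.

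The remaining and main step is the reverse inclusion: every polygon move $v\to u$ is a cover. The key preliminary (a ``first-descent principle'') is that a move at rank $i$ leaves positions $1,\dots,i-1$ fixed and strictly lowers position $i$; iterating, the leading coordinate of a label sequence is non-increasing along any directed path and strictly decreases exactly at rank-one moves, and more generally, if $\lambda(X)$ and $\lambda(Y)$ first differ at position $s$ then every move on any path from $X$ to $Y$ occurs at rank $\ge s$ (the minimal-rank coordinate can only decrease, so it cannot recover). Applied to $v$ and $u$, all moves on any path between them occur at ranks $\ge i$, fixing the flats at ranks $0,\dots,i-1$ and taking place inside $[x_{i-1},\hat 1]$; since the restriction of $\lambda$ is again a minimal labeling (as used in the proof of Lemma~\ref{assoc-an-expression}), this reduces us to the case $i=1$. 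So let $v=\hat 0\lessdot p\lessdot z_2\lessdot\cdots\lessdot z_r$ and $u=\hat 0\lessdot q\lessdot z_2\lessdot\cdots\lessdot z_r$ with $q=\min A(z_2)$, and suppose for contradiction there is a path $v=c_0\to\cdots\to c_l=u$ with $l\ge 2$. A first move at rank one would already give $u$, so the first move has rank $\ge 2$ and the rank-one flat stays $p$. Tracking the rank-one flat shows it is non-increasing and bounded below by $q$, which forces a single rank-one move sending it directly to $q$ while leaving the rank-two flat equal to some $\zeta$ with $\min A(\zeta)=q$. Here the geometric hypothesis is decisive: both $z_2$ and $\zeta$ are rank-two flats containing the atoms $p$ and $q$, and $p\vee q$ is the \emph{unique} such flat in a geometric lattice, so $\zeta=z_2$. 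Finally the part of the path before the rank-one move lies in $[p,\hat 1]$ and the part after it lies in $[q,\hat 1]$, each beginning and ending with the \emph{same} leading label; the first-descent principle confines both parts to $[z_2,\hat 1]$, yielding $(z_3,\dots,z_r)\ge \mathrm{rest}\ge (z_3,\dots,z_r)$ in the maximal chain descent order of $[z_2,\hat 1]$, where $\mathrm{rest}$ is the common tail of $c_{t-1}$ and $c_t$. By antisymmetry the tails agree, forcing $c_{t-1}=v$ and $c_t=u$, hence $l=1$, a contradiction.

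I expect this last step to be the main obstacle: the edge identification and the acyclicity are routine, but ruling out shortcuts for a polygon move genuinely requires the reduction to a rank-one descent, the monotonicity of the leading label, and above all the uniqueness of the line $p\vee q$ in a geometric lattice together with the antisymmetry sandwich that collapses any putative longer path.
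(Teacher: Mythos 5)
Your route is genuinely different from the paper's: the paper's entire proof of this proposition is a citation of Lacina's theorem \cite{SL} that a minimal labeling of a finite geometric lattice is polygon strong, hence polygon complete, which is exactly the statement that polygon moves coincide with the cover relations of $P_{\lambda}(2)$. You instead prove polygon completeness from scratch. Your edge identification, the strict lexicographic decrease of polygon moves (hence acyclicity, antisymmetry, and the fact that every cover relation is a single polygon move), the bottom-up ``first-descent principle,'' the reduction to a rank-one descent via restriction to $[x_{i-1},\hat{1}]$ (legitimate, since the paper itself asserts that the restriction of a minimal labeling to an upper interval of an atom is again a minimal labeling, and this iterates up a saturated chain), and the final antisymmetry sandwich inside $[z_2,\hat{1}]$ are all sound. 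What this buys is a self-contained argument where the paper has only a black-box citation.

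There is, however, one step whose stated justification is a non sequitur: ``non-increasing and bounded below by $q$ \dots forces a single rank-one move sending it directly to $q$.'' Non-increasing and bounded below by $q$ is perfectly compatible with two or more rank-one moves passing through intermediate atoms $p''$ with $q < p'' < p$; and if the \emph{last} rank-one move starts from such a $p''$ rather than from $p$, then your flat $\zeta$ need not lie above $p$, which breaks the identification $\zeta = p\vee q = z_2$ and with it the rest of the argument. The claim is true, but it needs the lex-monotonicity you already established. Let the first rank-one move occur at step $t$, so $c_0=v,\dots,c_{t-1}$ all contain $p$ and their label sequences all begin with $p$. Writing $\zeta$ for the rank-two flat of $c_{t-1}$, lexicographic decrease along $c_0\to\cdots\to c_{t-1}$ forces the second coordinate $\lambda(p,\zeta)$ of $\lambda(c_{t-1})$ to satisfy $\lambda(p,\zeta)\le\lambda(p,z_2)=q$; the descent at rank one gives $\lambda(p,\zeta)<p$, hence $\min A(\zeta)=\lambda(p,\zeta)\le q$, so this first rank-one move sends the rank-one flat to a label at most $q$. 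Since that flat is non-increasing thereafter and must end at $q$, it lands exactly on $q$, and any later rank-one move would push it strictly below $q$; so the first rank-one move is the only one, it starts from $p$, and $\zeta\ge p$, $\zeta\ge q$ give $\zeta=p\vee q=z_2$ as you intended. With this repair the remainder of your proof goes through.
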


\begin{proof}
This follows directly from the result of Stephen Lacina \cite{SL}  that any finite geometric lattice  with EL-labeling given by a minimal labeling is polygon strong, hence polygon complete, implying that the so-called polygon moves are exactly the cover relations of the associated maximal chain descent order.
\end{proof}

\end{document}